\documentclass[12pt]{amsart}
\usepackage[english]{babel}
\usepackage{amsmath}
\usepackage{amsthm}
\usepackage{anysize}
\usepackage{mathtools}
\usepackage{mathrsfs}
\usepackage{amssymb}
\usepackage{xcolor}
\usepackage{xfrac}
\usepackage{esint}
\usepackage{graphicx}
\usepackage{float}
\usepackage{array}
\usepackage{enumitem}
\usepackage{tikz-cd}
\usepackage{centernot}
\usepackage{stmaryrd}
\usepackage{comment}
\excludecomment{confidential}

\numberwithin{equation}{section}

\newtheorem{theorem}{Theorem}[section]

\newtheorem{lemma}[theorem]{Lemma}

\newtheorem{definition}[theorem]{Definition}

\DeclareMathOperator{\Alg}{Alg}

\DeclareMathOperator{\ord}{ord}

\DeclareMathOperator{\Tor}{Tor}
\DeclareMathOperator{\Mat}{Mat}

\DeclareMathOperator{\codim}{codim}

\begin{document}
	\title{New analytical and geometrical aspects of the algebraic multiplicity}
\author{Juli\'an L\'opez-G\'omez, Juan Carlos Sampedro} \thanks{The first author has been supported by the Research Grant PGC2018-097104-B-100 of the Spanish Ministry of Science, Technology and Universities and by the Institute of Interdisciplinar Mathematics of Complutense University. The second author has been supported by PhD Grant PRE2019\_1\_0220 of the Basque Country Government.}
	
	\address{Institute of Interdisciplinary Mathematics (IMI) \\
		Department of Analysis and Applied Mathematics \\
		Complutense University of Madrid \\
		28040-Madrid \\
		Spain.}
	\email{Lopez$\_$Gomez@mat.ucm.es, juancsam@ucm.es}


\keywords{Algebraic multiplicity, Nonlinear Spectral Theory, Schur complement, local determinant, bifurcation equation, local intersection index, Algebraic Geometry}
\subjclass[2010]{47J10, 47J15, 58C40, 14C17}

\begin{abstract}
This paper reveals some new analytical and geometrical properties of the generalized algebraic multiplicity, $\chi$,  introduced in \cite{ELG,E} and further developed in \cite{LG01,LGMC,JLJC}. In particular, it establishes a completely new connection  between $\chi$ and the concept of local intersection index of algebraic varieties, a central device in Algebraic Geometry.  This link between Nonlinear Spectral Theory and Algebraic Geometry provides to $\chi$ with a deep geometrical meaning. Moreover, $\chi$ is characterized through the new notion of local determinant of the Schur operator associated to the linear path, $\mathfrak{L}(\lambda)$.
\end{abstract}

\maketitle

\section{Introduction}

\noindent This paper establishes some new  properties of the generalized algebraic multiplicity introduced in \cite{ELG,E} and further developed in \cite{LG01,LGMC,JLJC}, denoted by $\chi$ in this paper. In particular, it reveals a hidden connection  between $\chi$ and the concept of local intersection index of algebraic varieties, a central device in Algebraic Geometry.  $\chi$ is  a concept of multiplicity for general curves of linear Fredholm operators, $\mathfrak{L}(\lambda)$, which has shown to be pivotal in Bifurcation Theory and Nonlinear Analysis.
\par
Classical Spectral Theory deals with special  curves $\mathfrak{L}\in \mathcal{C}(\Omega,\mathcal{L}_c(U,V))$ of the form
$$
  \mathfrak{L}(\lambda)= \lambda I_U -K,\qquad \lambda\in\Omega,
$$
where $U$ and $V$ are two $\mathbb{K}$-Banach spaces such that $U\subset V$, $\mathbb{K}\in\{\mathbb{R},\mathbb{C}\}$,  $\Omega$ is an open domain of $\mathbb{K}$, $\mathcal{L}_{c}(U,V)$ stands for the space of linear continuous operators that are compact perturbations of the identity map, $I_{U}$, and $K$ is a given compact operator. Adopting a geometrical perspective,
Classical Spectral Theory studies the intersections of the straight line $\mathfrak{L}(\lambda)$ with the space of singular operators
$$
  \mathcal{S}(U,V):=\mathcal{L}(U,V)\backslash GL(U,V),
$$
where  $GL(U,V)$ denotes the space of invertible operators. In this context,   $\lambda_{0}\in \Omega$ is said to be an eigenvalue of the straight line $\mathfrak{L}(\lambda)=\lambda I_{U}-K$ if $\mathfrak{L}(\lambda_{0})\in \mathcal{S}(U,V)$, i.e., if $\lambda_0\in\sigma(K)$. Note that, in particular, these linear paths lie in the set of Fredholm operators of index zero, denoted in this paper by $\Phi_0(U,V)$.
\par
More generally, Nonlinear Spectral Theory deals with general continuous paths in $\Phi_{0}(U,V)$,  $\mathfrak{L}\in \mathcal{C}(\Omega,\Phi_{0}(U,V))$, generalizing the classical theory not only because it deals with arbitrary continuous curves, not necessarily straight lines, but also because these paths can lie in  $\Phi_{0}(U,V)$ remaining outside $\mathcal{L}_c(U,V)\varsubsetneq \Phi_{0}(U,V)$.
In this general context, given a Fredholm path, $\mathfrak{L}\in \mathcal{C}(\Omega,\Phi_{0}(U,V))$, a value $\lambda\in\Omega$ is said to be a \emph{generalized eigenvalue} of $\mathfrak{L}$ if $\mathfrak{L}(\lambda)\notin GL(U,V)$. Then, the \emph{generalized spectrum} of $\mathfrak{L}$, $\Sigma(\mathfrak{L})$,  consists of the set of generalized eigenvalues,  i.e., 	
\begin{equation*}
\Sigma(\mathfrak{L}):=\{\lambda\in\Omega: \mathfrak{L}(\lambda)\notin GL(U,V)\}.
\end{equation*}
Thus, as in the classical case, $\Sigma(\mathfrak{L})$ also consists of the intersection points of the curve $\mathfrak{L}$ with the singular manifold  $\mathcal{S}(U,V)\cap\Phi_{0}(U,V)$. Naturally, for every compact operator $K$,
$$
  \Sigma(\lambda I_U-K)=\sigma(K).
$$
An important ingredient in Nonlinear Spectral Theory is the concept of \textit{generalized algebraic multiplicity}, which assigns to every pair $(\mathfrak{L},\lambda_{0})$ with $\mathfrak{L}\in \mathcal{C}^\infty(\Omega,\Phi_{0}(U,V))$ and $\lambda_{0}\in\Sigma(\mathfrak{L})$, an integer
$\chi[\mathfrak{L},\lambda_{0}]\in [1,+\infty]$ such that, whenever  $\dim U = \dim V <\infty$,
\begin{equation}
\label{1.0}
  \chi[\mathfrak{L},\lambda_{0}]=\ord_{\lambda_0}\det\mathfrak{L}(\lambda).
\end{equation}
Although a number of constructions of $\chi[\mathfrak{L},\lambda_{0}]$ had been done
by G\"{o}hberg and Sigal \cite{GS}, Magnus \cite{Ma}, Ize \cite{Iz}, Esquinas and L\'{o}pez-G\'{o}mez \cite{ELG}, Esquinas \cite{Es} and Rabier \cite{Ra1}, it was not until 2001, that Chapters 4 and 5  of L\'{o}pez-G\'{o}mez \cite{LG01} characterized whether all these generalized algebraic multiplicities were finite
through the pivotal concept of \emph{algebraic eigenvalue}.  A generalized eigenvalue,
$\lambda_0\in\Omega$, of a continuous path of Fredholm
operators, $\mathfrak{L}(\lambda)\in \mathcal{C}(\Omega,\Phi_{0}(U,V))$, is said to be $\kappa$\textit{-algebraic} if there exist $\varepsilon>0$ and $C>0$  such that $\mathfrak{L}(\lambda)$ is an isomorphism whenever  $0<|\lambda-\lambda_0|<\varepsilon$, and
\begin{equation}
\label{1.1}
\|\mathfrak{L}^{-1}(\lambda)\|<\frac{C}{|\lambda-\lambda_{0}|^{\kappa}}\quad\hbox{if}\;\;
0<|\lambda-\lambda_0|<\varepsilon,
\end{equation}
with $\kappa$ the minimal integer for which \eqref{1.1} holds.  Throughout this paper, the set of $\kappa$-transversal eigenvalues of $\mathfrak{L}(\lambda)$ is denoted by $\Alg_{\kappa}(\mathfrak{L})$,
 and the set of \emph{algebraic eigenvalues} of $\mathfrak{L}$ is defined by
\[
\Alg(\mathfrak{L}):=\bigcup_{\kappa\in\mathbb{N}}\Alg_\kappa(\mathfrak{L}).
\]
It turns out that the multiplicities of \cite{Ma,ELG,Es} are well defined if, and only if, the path $\mathfrak{L}(\lambda)$ is of class $\mathcal{C}^r$ for some $r\geq 1$ and  $\lambda_0\in \Alg_{\kappa}(\mathfrak{L})$ for some $1\leq \kappa\leq r$.
Moreover, by Theorems 4.4.1 and 4.4.4 of L\'{o}pez-G\'{o}mez \cite{LG01}, when $\mathfrak{L}$ is analytic and $\mathfrak{L}(\Omega)$ contains some invertible operator, then $\Sigma(\mathfrak{L})$ is discrete, and every $\lambda_0\in\Sigma(\mathfrak{L})$  is an algebraic eigenvalue of $\mathfrak{L}$. Thus, through this paper, it is convenient to denote by $\mathcal{A}_{\lambda_{0}}(\Omega_{\lambda_{0}},\Phi_{0}(X,Y))$ the set of curves $\mathfrak{L}\in\mathcal{C}^{r}(\Omega_{\lambda_{0}},\Phi_{0}(X,Y))$ such that $\lambda_{0}\in\Alg_{\kappa}(\mathfrak{L})\cap\Omega_{\lambda_0}$ with $1\leq \kappa \leq r$ for some $r\in\mathbb{N}$. According to \cite[Ch. 4]{LG01},  the generalized algebraic multiplicity $\chi[\mathfrak{L},\lambda_{0}]$ is well defined if and only if $\mathfrak{L}\in\mathcal{A}_{\lambda_{0}}(\Omega_{\lambda_{0}},\Phi_{0}(X,Y))$.
\par
Short time later, in 2004,   the theory of algebraic multiplicities for $\mathcal{C}^\infty$-Fredholm paths was axiomatized by Mora-Corral \cite{MC} by establishing that, modulus a normalization condition (see Theorem \ref{th2.2} for the precise statement), given  an open connected neighborhood of $\lambda_{0}$ in $\mathbb{K}$, denoted by $\Omega_{\lambda_{0}}$,  the algebraic multiplicity $\chi$ is the unique map
\begin{equation*}
\chi[\cdot, \lambda_{0}]: \mathcal{C}^{\infty}(\Omega_{\lambda_{0}}, \Phi_{0}(U))\longrightarrow [0,\infty]
\end{equation*}
satisfying the \emph{product formula}
\begin{equation*}
\chi[\mathfrak{L}\circ\mathfrak{M}, \lambda_{0}]=\chi[\mathfrak{L},\lambda_{0}]+\chi[\mathfrak{M},\lambda_{0}]
\end{equation*}
for all $\mathfrak{L}, \mathfrak{M} \in \mathcal{C}^{\infty}(\Omega_{\lambda_{0}}, \Phi_{0}(U))$,
regardless whether, or not, $\lambda_0$ is singular. These findings were collected by L\'{o}pez-G\'{o}mez and Mora-Corral in \cite{LGMC}, where, in addition, the theory of
G\"{o}hberg and Sigal \cite{GS} was substantially generalized  to a non-holomorphic setting,
and the existence of the local Smith  form was established in the class
$\mathcal{A}_{\lambda_{0}}(\Omega_{\lambda_{0}},\Phi_{0}(X,Y))$ through the lengths of the Jordan chains of $\mathfrak{L}(\lambda)$.
\par
The  relevance of  $\chi[\mathfrak{L},\lambda_{0}]$ in Nonlinear Analysis relies on the crucial fact that, in the special case when $U=V$, $\mathbb{K}=\mathbb{R}$,
$\Omega_{\lambda_0}=(a,b)$ for some $a<b$, $\lambda_0\in (a,b)$ is isolated in $\Sigma(\mathfrak{L})$, and $I_U-\mathfrak{L}(\lambda)$ is compact for all $\lambda\in(a,b)$, by \cite[Th. 5.6.2]{LG01}, $\chi[\mathfrak{L},\lambda_0]$ is odd if and only if the Leray--Schauder degree $\deg_{LS}(\mathfrak{L}(\lambda),B_R(0))$ changes as $\lambda$ crosses $\lambda_0$ (see L\'{o}pez-G\'{o}mez and Sampedro \cite{JLJC} for far more general results in the Fredholm context). In this paper, for any given $u_0\in U$ and $R>0$, we denote by $B_R(u_0)$ the ball of radius $R$ centered at $u_0$. Note that, whenever $\dim U<+\infty$, one can chose a basis in $U$ so that
$$
  \deg_{LS}(\mathfrak{L}(\lambda),B_R(0))=\mathrm{sign\,det}\mathfrak{L}(\lambda)
$$
for all $\lambda \in (a,b)\setminus \Sigma(\mathfrak{L})$.
These changes of degree have relevant consequences in Nonlinear Analysis, because they  entail that,
for every completely continuous  map $\mathfrak{N}(\lambda,u)$ such that $\mathfrak{N}(\lambda,0)=0$ and $\mathfrak{N}(\lambda,u)=o(\|u\|)$ as $u\to 0$, the set of nontrivial solutions of the equation
\begin{equation}
\label{1.2}
\mathscr{F}(\lambda,u)\equiv \mathfrak{L}(\lambda)u+\mathfrak{N}(\lambda,u)=0
\end{equation}
possesses a connected component, denoted by $\mathscr{C}$,  bifurcating from $u=0$ at $\lambda=\lambda_0$ (see \cite{LG01,LG02}).
By a nontrivial solution it is meant any pair $(\lambda,u)\in\mathscr{F}^{-1}(0)$ with $u\neq 0$. Surprisingly, this occurs for every completely continuous nonlinearity $\mathfrak{N}(\lambda,u)$ if and only if $\chi[\mathfrak{L},\lambda_{0}]$ is odd.  And, in such case, the component  $\mathscr{C}$
satisfies the \emph{global alternative} of Rabinowitz \cite{Ra}, as well as the global theorems of Nirenberg \cite{Ni} and Magnus \cite{Ma} (see \cite[Th. 6.3.1]{LG01}). Indeed, when $(a,b)=\mathbb{R}$, either
\begin{enumerate}
\item[(a)] $\mathscr{C}$ is unbounded in $\mathbb{R}\times U$; or

\item[(b)] there exists $\lambda_1\neq \lambda_0$ such that $(\lambda_1,0)\in \overline{\mathscr{C}}$.
\end{enumerate}
The Leray--Schauder degree had been incorporated to the weaponry of Nonlinear Integral Equations by Krasnoselskij \cite{Kr} in order to prove his celebrated local bifurcation theorem, establishing that
if $K:U\to U$ is compact, $\mathfrak{L}(\lambda)=\lambda I_U -K$ for all $\lambda\in (a,b)$, and $\lambda_0\in \sigma(K)$ is an eigenvalue with odd (classical) algebraic multiplicity, then the component $\mathscr{C}$ exists for every completely continuous map $\mathfrak{N}(\lambda,u)$ satisfying the previous requirements.
\par
The first aim of this paper is to identify $\chi[\mathfrak{L},\lambda_0]$ in terms of the \emph{Schur operator} of the path $\mathfrak{L}(\lambda)$. This provides us with a definition of $\chi$ as a natural extension of the finite dimensional concept \eqref{1.0} adopting a direct approach. In this paper, for any given $T\in \Phi_0(U,V)$, we will denote  by $N[T]$ the null space, or kernel, of $T$, and by $R[T]$ the range, or image, of $T$. Assume $\lambda_0\in\Sigma(\mathfrak{L})$ and let $P\in \mathcal{L}(U)$ and $Q\in \mathcal{L}(V)$ be a pair of projections onto $N[\mathfrak{L}(\lambda_0)]$ and $R[\mathfrak{L}(\lambda_0)]$, respectively. Then,
\begin{equation}
\label{1.3}
U  =(I_{U}-P)(U)\oplus N[\mathfrak{L}(\lambda_0)],\qquad V =R[\mathfrak{L}(\lambda_0)]\oplus(I_{V}-Q)(V),
\end{equation}
and $\mathfrak{L}(\lambda)$ can be expressed as a block operator matrix
\begin{equation}
\label{1.4}
\mathfrak{L}(\lambda) =\left(\begin{array}{cc} L_{11}(\lambda) & L_{12}(\lambda) \\[1ex] L_{21}(\lambda) & L_{22}(\lambda) \end{array}\right)
\end{equation}
where
\begin{equation}
\label{1.5}
\begin{array}{ll}
L_{11}(\lambda):=Q\mathfrak{L}(\lambda) (I_{U}-P), & \quad L_{12}(\lambda):=Q\mathfrak{L}(\lambda)P, \\ L_{21}(\lambda):=(I_{V}-Q)\mathfrak{L}(\lambda)(I_{U}-P), & \quad L_{22}(\lambda):=(I_{V}-Q)\mathfrak{L}(\lambda)P.
\end{array}
\end{equation}
In this context, the \textit{Schur operator} of $\mathfrak{L}(\lambda)$ associated to the projection pair $(P,Q)$ can be defined through
\begin{equation*}
\mathscr{S}_{\mathfrak{L}(\lambda),(P,Q)}(\mathfrak{L}(\lambda)): = L_{22}(\lambda)-L_{21}(\lambda)L_{11}^{-1}(\lambda)L_{12}(\lambda),\qquad \lambda \in\Omega_{\lambda_0}.
\end{equation*}
In terms of this operator,  our first result can be stated as follows.

\begin{theorem}
\label{th1.1}
Assume  $\mathfrak{L}\in\mathcal{C}(\Omega,\Phi_{0}(U,V))$ and $\lambda_0\in\Omega\cap\Sigma(\mathfrak{L})$ is an isolated eigenvalue. Then, for every pair
$(P,Q)$ of $\mathfrak{L}(\lambda_0)$-projections,
\begin{equation}
\label{1.6}
	 \mathscr{S}^{-1}_{\mathfrak{L}(\lambda_{0}),(P,Q)}(\mathfrak{L}(\lambda))  =
P\mathfrak{L}^{-1}(\lambda)(I_{V}-Q).
\end{equation}
If, in addition, $\mathfrak{L}\in\mathcal{A}_{\lambda_{0}}(\Omega_{\lambda_{0}},\Phi_{0}(X,Y))$, then
\begin{equation}
\label{1.7}
 \chi[\mathfrak{L},\lambda_{0}]=
 \ord_{\lambda_{0}}\det\mathscr{S}_{\mathfrak{L}(\lambda_{0}),(P,Q)}(\mathfrak{L}(\lambda))=
 \ord_{\lambda_{0}}\det[P\mathfrak{L}^{-1}(\lambda)(I_{V}-Q)]^{-1}.
\end{equation}
\end{theorem}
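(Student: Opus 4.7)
The plan is to split the statement into two parts: the Schur-inverse identity \eqref{1.6} is handled by a block $LDU$ factorization of $\mathfrak{L}(\lambda)$, and the multiplicity formula \eqref{1.7} is then derived via the product formula that axiomatically characterizes $\chi$.

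For \eqref{1.6}, with respect to the decompositions \eqref{1.3} the operator $\mathfrak{L}(\lambda)$ takes the block form \eqref{1.4}--\eqref{1.5}. The corner $L_{11}(\lambda_0)$ is the corestriction of $\mathfrak{L}(\lambda_0)$ from a topological complement of its kernel onto its range, hence a linear isomorphism from $(I_U-P)U$ onto $QV$; by continuity of $\mathfrak{L}$ and the openness of $GL$, the operator $L_{11}(\lambda)$ stays invertible on a neighborhood of $\lambda_0$, and one can shrink $\Omega_{\lambda_0}$ to ensure this. With $L_{11}(\lambda)$ invertible, the classical block $LDU$ factorization
\begin{equation*}
\mathfrak{L}(\lambda)=
\begin{pmatrix} I & 0 \\ L_{21}L_{11}^{-1} & I \end{pmatrix}
\begin{pmatrix} L_{11} & 0 \\ 0 & \mathscr{S} \end{pmatrix}
\begin{pmatrix} I & L_{11}^{-1}L_{12} \\ 0 & I \end{pmatrix}
\end{equation*}
holds, with $\mathscr{S}\equiv\mathscr{S}_{\mathfrak{L}(\lambda_0),(P,Q)}(\mathfrak{L}(\lambda))$. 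Inverting term by term (the outer unit-triangular factors admit explicit triangular inverses), the $(2,2)$-entry of $\mathfrak{L}^{-1}(\lambda)$ reads off as $\mathscr{S}^{-1}(\lambda)$, which in the language of the splittings \eqref{1.3} is precisely $P\mathfrak{L}^{-1}(\lambda)(I_V-Q)$, proving \eqref{1.6}.

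For \eqref{1.7}, under the additional assumption $\mathfrak{L}\in\mathcal{A}_{\lambda_0}(\Omega_{\lambda_0},\Phi_0(X,Y))$, the plan is to apply the product formula of $\chi$ (the Mora-Corral axiomatization) to the above $LDU$ factorization. The outer unit-triangular factors are invertible throughout $\Omega_{\lambda_0}$, so their multiplicity at $\lambda_0$ vanishes. Splitting the diagonal middle factor further as $\mathrm{diag}(L_{11},I)\circ\mathrm{diag}(I,\mathscr{S})$ and applying the product formula once more, the $L_{11}$-term also contributes zero since $L_{11}(\lambda_0)$ is invertible. This leaves $\chi[\mathfrak{L},\lambda_0]=\chi[\mathscr{S},\lambda_0]$. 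But $\mathscr{S}(\lambda)$ acts between the finite-dimensional spaces $PU$ and $(I_V-Q)V$, both of dimension $\dim N[\mathfrak{L}(\lambda_0)]<\infty$; after identifying these spaces via fixed bases, the normalization \eqref{1.0} yields $\chi[\mathscr{S},\lambda_0]=\ord_{\lambda_0}\det\mathscr{S}(\lambda)$. The second identity in \eqref{1.7} is immediate from \eqref{1.6}, since $[P\mathfrak{L}^{-1}(\lambda)(I_V-Q)]^{-1}=\mathscr{S}(\lambda)$.

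The main technical obstacle is verifying that every factor in the decomposition belongs to the class where $\chi$ is well-defined and satisfies the product formula, in particular that $\lambda_0$ is algebraic for $\mathscr{S}$; this does not follow formally from the algebraic character of $\lambda_0$ for $\mathfrak{L}$, but it can be extracted from \eqref{1.6}, since the estimate $\|\mathfrak{L}^{-1}(\lambda)\|=O(|\lambda-\lambda_0|^{-\kappa})$ transfers to $\|\mathscr{S}^{-1}(\lambda)\|$ through the bounded projections $P$ and $I_V-Q$. A subsidiary point is the basis dependence of $\det\mathscr{S}(\lambda)$, but any change of basis multiplies the determinant by a nonvanishing constant, so its order of vanishing at $\lambda_0$ is canonical.
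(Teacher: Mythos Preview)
Your proposal is correct and follows essentially the same route as the paper: both arguments hinge on the block $LDU$ factorization \eqref{3.4}, read off \eqref{1.6} from the $(2,2)$ entry of the inverse, verify that $\lambda_0$ is algebraic for the Schur complement by transferring the growth bound on $\mathfrak{L}^{-1}(\lambda)$, and then apply the product formula together with the finite-dimensional identity \eqref{2.4}. The only cosmetic difference is ordering: the paper first establishes algebraicity of $\mathscr{S}$ via the factorized inverse \eqref{iii.10} and then derives \eqref{1.6}, whereas you prove \eqref{1.6} first and use it directly to transfer the $O(|\lambda-\lambda_0|^{-\kappa})$ bound.
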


Theorem \ref{th1.1} allows us to introduce, in a rather natural manner,  a local concept of infinite-dimensional determinant, $\mathcal{D}_{\mathfrak{L}(\lambda_{0}),(P,Q)}$, for which
$$
\chi[\mathfrak{L},\lambda_{0}]=\ord_{\lambda_{0}}
 \mathcal{D}_{\mathfrak{L}(\lambda_{0}),(P,Q)}(\mathfrak{L}(\lambda))=
 \ord_{\lambda_{0}}\det\mathscr{S}_{\mathfrak{L}(\lambda_{0}),(P,Q)}(\mathfrak{L}(\lambda)).
$$
Indeed, we will see that taking
$$
  \mathcal{D}_{\mathfrak{L}(\lambda_{0}),(P,Q)}(\mathfrak{L}(\lambda)):= \det(L_{11}(\lambda))\cdot\det(\mathscr{S}_{T,(P,Q)}(\mathfrak{L}(\lambda))
$$
is consistent with the classical theory of Schur complements. As a byproduct of Theorem \ref{th1.1}, the multiplicity of Ize \cite{Iz},  introduced for analytic paths, can be defined \emph{mutatis mutandis} for every admissible path $\mathfrak{L}\in\mathcal{A}_{\lambda_{0}}(\Omega_{\lambda_{0}},\Phi_{0}(X,Y))$.
Our next result establishes a sharp connection between these concepts and Bifurcation Theory.

\begin{theorem}
\label{th1.2}
Assume $\mathfrak{L}\in \mathcal{C}^1(\Omega,\Phi_{0}(U,V))$ and $\lambda_0\in\Omega$. Then,
for every pair $(P,Q)$ of $\mathfrak{L}(\lambda_0)$-projections, the Schur operator $\mathscr{S}_{\mathfrak{L}(\lambda_{0}),(P,Q)}(\mathfrak{L}(\lambda))$ coincides with the linearization at $(\lambda,u)=(\lambda,0)$ of the bifurcation equation of \eqref{1.2} through the Lyapunov--Schmidt decomposition of projections $(P,Q)$, derived as discussed in \cite[Ch. 3]{LG01}, which is  denoted by $\mathscr{B}(\lambda)$ in this paper (see Section 4), i.e.
\begin{equation}
\label{1.8}
  \mathscr{B}(\lambda)=\mathscr{S}_{\mathfrak{L}(\lambda_0),(P,Q)} (\mathfrak{L}(\lambda))=
  L_{22}(\lambda) - L_{21}(\lambda)   L_{11}^{-1}(\lambda)L_{12}(\lambda).
\end{equation}
If, in addition, $\mathfrak{L}\in\mathcal{A}_{\lambda_{0}}(\Omega_{\lambda_{0}},\Phi_{0}(X,Y))$, then, thanks to Theorem \ref{th1.1},
\begin{equation}
\label{1.9}
 \chi[\mathfrak{L},\lambda_{0}]=\ord_{\lambda_{0}}\det\mathscr{S}_{\mathfrak{L}(\lambda_{0}),(P,Q)}
 (\mathfrak{L}(\lambda))=\ord_{\lambda_{0}}\det\mathscr{B}(\lambda),
\end{equation}
regardless the pair of projections chosen, $(P,Q)$. Therefore, thanks to Theorem
\cite[Th. 4.3.4]{LG01}, the following assertions are equivalent:
\begin{enumerate}
\item[{\rm (a)}] $\lambda_0$ is a nonlinear eigenvalue of $\mathfrak{L}(\lambda)$, in the sense that
$(\lambda_0,0)$ is a bifurcation point of \eqref{1.2} from $(\lambda,u)=(\lambda,0)$ for all map $\mathfrak{N}(\lambda,u)$ of class $\mathcal{C}^1$.
\item[{\rm (b)}]  $\chi[\mathfrak{L},\lambda_0]$ is an odd integer.
\item[{\rm (c)}] $\det \mathscr{B}(\lambda)$ changes of sign as $\lambda$ crosses $\lambda_0$, regardless $(P,Q)$.
\item[{\rm (d)}] $\det \mathscr{S}_{\mathfrak{L}(\lambda),(P,Q)}(\mathfrak{L}(\lambda))$ changes of sign as $\lambda$ crosses $\lambda_0$, regardless $(P,Q)$.
\item[{\rm (e)}] $\det [P\mathfrak{L}^{-1}(\lambda)(I_{V}-Q)]$ changes of sign as $\lambda$ crosses
$\lambda_0$, regardless $(P,Q)$.
\end{enumerate}
Furthermore, when any of these conditions occurs, there is a component, $\mathscr{C}$,  of the set of nontrivial solutions of \eqref{iv.1}  such that $(\lambda_0,0)\in \overline{\mathscr{C}}$. By a component, it is meant any closed and connected subset maximal for the inclusion.
\end{theorem}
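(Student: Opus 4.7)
The plan is to carry out the Lyapunov--Schmidt reduction of \eqref{1.2} along the projection pair $(P,Q)$ to derive \eqref{1.8}, then invoke Theorem \ref{th1.1} to pass to \eqref{1.9}, and finally extract the equivalences (a)--(e) and the bifurcating component.

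To obtain \eqref{1.8}, I would decompose $u = w + z$ with $w := Pu \in N[\mathfrak{L}(\lambda_0)]$ and $z := (I_U-P)u \in (I_U-P)(U)$, and project \eqref{1.2} with $Q$ and $I_V-Q$ according to \eqref{1.3} to obtain the equivalent system
\begin{align*}
L_{11}(\lambda)z + L_{12}(\lambda)w + Q\mathfrak{N}(\lambda, w+z) &= 0, \\
L_{21}(\lambda)z + L_{22}(\lambda)w + (I_V-Q)\mathfrak{N}(\lambda, w+z) &= 0.
\end{align*}
Since $L_{11}(\lambda_0):(I_U-P)(U)\to R[\mathfrak{L}(\lambda_0)]$ is an isomorphism by construction and $\mathfrak{L}(\lambda_0)\in\Phi_0(U,V)$, $L_{11}(\lambda)$ remains invertible in a neighborhood of $\lambda_0$ by the openness of $GL$ and the continuity of $\mathfrak{L}$. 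The Implicit Function Theorem then provides a unique $\mathcal{C}^1$-solution $z = z(\lambda,w)$ of the first equation with $z(\lambda,0)=0$, thanks to $\mathfrak{N}(\lambda,0)=0$. Substituting $z(\lambda,w)$ into the second equation yields the bifurcation equation whose linearization at $w = 0$ equals
\[
D_w\mathscr{B}(\lambda,0) = L_{22}(\lambda) - L_{21}(\lambda)L_{11}^{-1}(\lambda)L_{12}(\lambda),
\]
because the assumption $\mathfrak{N}(\lambda,u) = o(\|u\|)$ kills every first order contribution of $\mathfrak{N}$ in $w$. This is precisely $\mathscr{S}_{\mathfrak{L}(\lambda_0),(P,Q)}(\mathfrak{L}(\lambda))$, establishing \eqref{1.8}.

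Assuming now $\mathfrak{L}\in\mathcal{A}_{\lambda_0}(\Omega_{\lambda_0},\Phi_0(X,Y))$, identity \eqref{1.9} follows at once by inserting \eqref{1.8} into \eqref{1.7} of Theorem \ref{th1.1}. For the equivalence chain: $(a)\Leftrightarrow(b)$ is the cited \cite[Th. 4.3.4]{LG01}; $(b)\Leftrightarrow(c)$ holds because, by \eqref{1.9}, $\lambda\mapsto\det\mathscr{B}(\lambda)$ is a scalar $\mathcal{C}^r$ function with a zero of order $\chi[\mathfrak{L},\lambda_0]$ at $\lambda_0$, and a scalar $\mathcal{C}^r$ map changes sign across an isolated zero if and only if its order of vanishing is odd; $(c)\Leftrightarrow(d)$ is immediate from \eqref{1.8}; and $(d)\Leftrightarrow(e)$ follows from \eqref{1.6} combined with the identity $\det[P\mathfrak{L}^{-1}(\lambda)(I_V-Q)] = 1/\det\mathscr{S}_{\mathfrak{L}(\lambda_0),(P,Q)}(\mathfrak{L}(\lambda))$, which has the same sign and hence the same sign-change pattern. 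The independence of the sign-change statements with respect to $(P,Q)$ is inherited from the well-known $(P,Q)$-independence of $\chi[\mathfrak{L},\lambda_0]$. Finally, when any of (a)--(e) holds, the odd jump of the Leray--Schauder degree of $\mathfrak{L}(\lambda)+\mathfrak{N}(\lambda,\cdot)$ on small balls around the origin, as $\lambda$ crosses $\lambda_0$, is delivered by the discussion recalled in the Introduction and by \cite{JLJC}; the standard Rabinowitz-type global bifurcation argument then yields the component $\mathscr{C}$ with $(\lambda_0,0)\in\overline{\mathscr{C}}$.

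The main technical obstacle I anticipate is the clean identification of the linearization of the bifurcation equation with the Schur operator: one has to track the Implicit Function Theorem on a function space bifurcation system in which the nonlinearity $\mathfrak{N}$ enters non-trivially, and verify that the specific structural form $L_{22} - L_{21}L_{11}^{-1}L_{12}$ survives intact after linearization. Once that identification is secured, every other assertion reduces to an algebraic manipulation of scalar determinants together with an appeal to Theorem \ref{th1.1}, \cite[Th. 4.3.4]{LG01}, and the degree-jump mechanism.
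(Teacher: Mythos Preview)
Your proposal is correct and follows essentially the same route as the paper: Lyapunov--Schmidt reduction via the pair $(P,Q)$, solve the $Q$-equation for the complementary component by the Implicit Function Theorem, differentiate to obtain $D_x\mathscr{Y}(\lambda,0)=-L_{11}^{-1}(\lambda)L_{12}(\lambda)$, substitute into the $(I_V-Q)$-equation and identify the linearization $\mathscr{B}(\lambda)$ with the Schur operator, then combine with Theorem~\ref{th1.1} and \cite[Th.~4.3.4]{LG01} for \eqref{1.9} and the equivalences. The only minor deviation is in the final step: the paper obtains the component by applying the Brouwer degree to the finite-dimensional bifurcation equation \eqref{iv.6}, invoking \cite[Th.~6.2.1]{LG01} to get a local continuum there, lifting it to \eqref{iv.1}, and then extending via Zorn's lemma, whereas you appeal directly to the (Fredholm) degree on the full equation and a Rabinowitz-type argument; both are valid, and your route is legitimate once \cite{JLJC} is invoked for the Fredholm setting.
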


Our second aim is incorporating a completely new perspective into the theory of algebraic multiplicities in order to establish a bisociation between Spectral Theory and Algebraic Geometry.  As already discussed above, any generalized eigenvalue $\lambda_{0}\in\Sigma(\mathfrak{L})$ of the curve $\mathfrak{L}\in\mathcal{C}(\Omega,\Phi_{0}(U,V))$ lies in the intersection of $\mathfrak{L}$ with the singular space $\mathcal{S}(U,V)\cap\Phi_{0}(U,V)$. This geometrical feature leads us to think that the algebraic multiplicity might actually measure  how the curve $\mathfrak{L}(\lambda)$ intersects geometrically with $\mathcal{S}(U,V)\cap\Phi_{0}(U,V)$. Thus, it might be  related to the \textit{local intersection index}, a pivotal geometrical device for measuring the nature of the intersections of varieties.
\par
Given a family of algebraic varieties, $\mathscr{V}$, an intersection theory over $\mathscr{V}$ consists  of giving a pairing
\begin{equation}
\label{1.10}
   \bullet\,:\; A^{r}(X)\times A^{s}(X)\to A^{r+s}(X)
\end{equation}
satisfying a series of axioms (see, e.g.,  Hartshorne \cite[pp. 426--427]{H}, Eisenbud and Harris \cite[Ch. 1 \& 2]{E}, and Fulton \cite[Ch. 7 \& 8]{F}) for every $r,s \in \mathbb{N}$ and $X\in\mathscr{V}$, where $A^{r}(X)$ stands for the group of cycles of codimension $r$ on $X$ modulo rational equivalence. The graded group
$$
 A(X)\equiv \bigoplus_{r\in\mathbb{N}}A^{r}(X)
$$
is referred to as the \textit{Chow group} after Chow \cite{Ch}. The pairing \eqref{1.10} gives to  $A(X)$ the structure of a graded ring, the \textit{Chow ring} of $X$. Giving an intersection theory to an algebraic variety $X$  consists in giving the structure of the Chow ring $A(X)$; the axioms of the pairing $\bullet$ try to mimic in $X$ the celebrated \textit{Bezout Theorem} \cite[\& 2.1.1]{E}. This explains why one of these axioms establishes that if $X_1$ and $X_2$ are subvarieties of $X$
with \emph{proper intersection}, in the sense that any irreducible component of $X_1\cap X_2$ has codimension $\codim X_1 + \codim X_2$, then
$$
[X_1]\bullet [X_2]=\sum_{j} i(X_1,X_2;C_{j})[C_{j}]
$$
where the sum runs over the set of all irreducible components, $C_{j}$, of $X_1\cap X_2$, and  the integer $i(X_1,X_2;C_{j})$ stands for the \textit{local intersection index} of $X_1$ and $X_2$ along $C_{j}$. By a result of Serre \cite[Ch. V, \& C.1]{S}, for any given pair $X_1, X_2$ of subvarieties of a smooth variety $X$ with proper intersection and any irreducible component, $C$,  of $X_1\cap X_2$, the local intersection index of $X_1$ and $X_2$ along $C$ is given through
$$
i(X_1,X_2;C)=\sum_{i=0}^{\dim X}(-1)^{i}\ell_{\mathcal{O}_{c,X}} \Tor^{\mathcal{O}_{c,X}}_{i}(\mathcal{O}_{c,X}/\mathfrak{P}_{X_1},\mathcal{O}_{c,X}/\mathfrak{P}_{X_2})
$$
where $\mathcal{O}_{c,X}$ is the local ring of $c\in C$ in $X$, and $\mathfrak{P}_{X_1}$ and $\mathfrak{P}_{X_2}$ are the ideals  of $X_1$ and $X_2$, respectively,  in the ring $\mathcal{O}_{c,X}$. In this paper, and, in particular, in the Serre formula, we are denoting by $\ell_{R}(M)$ the length of the module $M$ over the ring $R$, and by $\Tor_{i}^{R}$  the $i$-th Tor $R$-module. As in our context  $X$ is a smooth algebraic variety over an algebraically closed field,  $\mathbb{C}$, and $X_1, X_2$
are two irreducible Cohen--Macaulay subvarieties of $X$ such that $X_1\cap X_2=\{x\}$, with proper intersection, according to, e.g., Eisenbaud and Harris  \cite[p. 48]{E}, the local intersection index of $X_1$ and $X_2$ at $x$ reduces to
\begin{equation}
\label{1.11}
i(X_1,X_2;x):=\ell_{\mathcal{O}_{x,X}}(\mathcal{O}_{x,X}/(\mathfrak{P}_{X_1}+\mathfrak{P}_{X_2})).
\end{equation}
The third finding of this paper establishes a connection between the concepts of classical algebraic multiplicity and the intersection index. Subsequently, $\mathcal{D} : \mathcal{L}(\mathbb{C}^{N})\to\mathbb{C}$ stands for the determinant map defined by
$$
  \mathcal{D}(T):=\det T\quad \hbox{for every}\;\; T\in \mathcal{L}(\mathbb{C}^{N}).
$$

\begin{theorem}
\label{th1.3}
Let $T\in\mathcal{L}(\mathbb{C}^{N})$ and $\lambda_{0}\in\sigma(T)$ be, and set  $\mathfrak{L}(\lambda)=\lambda I_{N}-T$, $\lambda\in \mathbb{C}$. Then,  	
\begin{equation}
\label{1.12}
 \chi[\mathfrak{L},\lambda_{0}]=
 i(\mathcal{D}^{-1}(0),\mathfrak{L}(\mathbb{C});\mathfrak{L}(\lambda_{0})).
\end{equation}
\end{theorem}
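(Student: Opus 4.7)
\medskip\noindent\textbf{Proof plan.} The plan is to compute the right-hand side of \eqref{1.12} through the Cohen--Macaulay specialization \eqref{1.11} of Serre's intersection formula, reducing it to the order of vanishing of the characteristic polynomial, and then to conclude via the finite-dimensional normalization \eqref{1.0}. The ambient variety is $X:=\mathcal{L}(\mathbb{C}^{N})\cong \mathbb{A}^{N^{2}}_{\mathbb{C}}$, which is smooth. The hypersurface $X_{1}:=\mathcal{D}^{-1}(0)$ is irreducible, since the determinant is a well-known irreducible polynomial in the $N^{2}$ entry indeterminates, and a hypersurface inside a smooth variety is always Cohen--Macaulay. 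The curve $X_{2}:=\mathfrak{L}(\mathbb{C})$ is a smooth affine line in $X$, hence also irreducible and Cohen--Macaulay. Because $\codim X_{1}+\codim X_{2}=1+(N^{2}-1)=N^{2}=\dim X$, every proper intersection component must be a point; and indeed $X_{1}\cap X_{2}=\{\mu I_{N}-T:\mu\in\sigma(T)\}$ is finite, $\det\mathfrak{L}(\lambda)$ being a nonzero polynomial of degree $N$. After restricting to a Zariski open neighborhood of $x:=\mathfrak{L}(\lambda_{0})$ that omits the other intersection points, the hypotheses of \eqref{1.11} hold, and it only remains to compute $\ell_{\mathcal{O}_{x,X}}\bigl(\mathcal{O}_{x,X}/(\mathfrak{P}_{X_{1}}+\mathfrak{P}_{X_{2}})\bigr)$.

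Next, I would identify the ideals $\mathfrak{P}_{X_{1}}$ and $\mathfrak{P}_{X_{2}}$ explicitly. Writing $M=(M_{ij})$ for the coordinate matrix of $X$, one has $\mathfrak{P}_{X_{1}}=(\det M)$, while $\mathfrak{P}_{X_{2}}$ is generated by the $N^{2}-1$ linear relations
\[
 M_{ij}+T_{ij}\quad (i\neq j),\qquad M_{ii}-M_{11}+T_{ii}-T_{11}\quad (i\geq 2)
\]
that cut out the line $\{\lambda I_{N}-T:\lambda\in\mathbb{C}\}$. The quotient by $\mathfrak{P}_{X_{2}}$ is the local ring of the smooth curve $X_{2}\cong\mathbb{C}$ at $x$, so the pullback by the parameterization $\mathfrak{L}:\lambda\mapsto \lambda I_{N}-T$ induces an isomorphism
\[
 \mathcal{O}_{x,X}\big/\mathfrak{P}_{X_{2}}\;\cong\;\mathbb{C}[\lambda]_{(\lambda-\lambda_{0})},
\]
under which $\det M$ is sent to $\det(\lambda I_{N}-T)=\det\mathfrak{L}(\lambda)$. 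Consequently,
\[
 \mathcal{O}_{x,X}\big/(\mathfrak{P}_{X_{1}}+\mathfrak{P}_{X_{2}})\;\cong\;\mathbb{C}[\lambda]_{(\lambda-\lambda_{0})}\big/\bigl(\det\mathfrak{L}(\lambda)\bigr).
\]

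Finally, for any nonzero $p(\lambda)\in\mathbb{C}[\lambda]$, the elementary one-variable identity
\[
 \ell_{\mathbb{C}[\lambda]_{(\lambda-\lambda_{0})}}\!\bigl(\mathbb{C}[\lambda]_{(\lambda-\lambda_{0})}/(p)\bigr)=\ord_{\lambda_{0}}p
\]
holds, since $p$ factors in the local ring as a unit times $(\lambda-\lambda_{0})^{\ord_{\lambda_{0}}p}$ and the composition series of the resulting Artin quotient has exactly $\ord_{\lambda_{0}}p$ steps. Specializing to $p=\det\mathfrak{L}(\lambda)$ and invoking the normalization \eqref{1.0} yields
\[
 i(\mathcal{D}^{-1}(0),\mathfrak{L}(\mathbb{C});\mathfrak{L}(\lambda_{0}))=\ord_{\lambda_{0}}\det\mathfrak{L}(\lambda)=\chi[\mathfrak{L},\lambda_{0}],
\]
which is \eqref{1.12}. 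The main obstacle I anticipate is the clean algebraic verification that the pullback under $\mathfrak{L}$ identifies $\det M \bmod \mathfrak{P}_{X_{2}}$ with the characteristic polynomial inside $\mathbb{C}[\lambda]_{(\lambda-\lambda_{0})}$; once this dictionary between the matrix-space local ring and the parameter-space local ring is in place, the remainder reduces to a routine length computation in a one-variable Artin ring.
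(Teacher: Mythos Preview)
Your proposal is correct and follows essentially the same route as the paper: both verify the Cohen--Macaulay hypotheses so that \eqref{1.11} applies, write down the same $N^{2}-1$ linear generators for the ideal of the line $\mathfrak{L}(\mathbb{C})$, reduce the quotient $\mathcal{O}_{x,X}/(\mathfrak{P}_{X_{1}}+\mathfrak{P}_{X_{2}})$ to a one-variable Artin ring $\mathbb{C}[\lambda]_{(\lambda-\lambda_{0})}/\bigl((\lambda-\lambda_{0})^{\mathfrak{m}}\bigr)$, and then read off the length as $\mathfrak{m}=\ord_{\lambda_{0}}\det\mathfrak{L}(\lambda)=\chi[\mathfrak{L},\lambda_{0}]$. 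Your presentation via the pullback isomorphism $\mathcal{O}_{x,X}/\mathfrak{P}_{X_{2}}\cong\mathbb{C}[\lambda]_{(\lambda-\lambda_{0})}$ is slightly more conceptual than the paper's explicit coordinate computation, but the argument is the same.
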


Finally, in order to state our last finding, for any given $\mathfrak{L}\in \mathcal{A}_{\lambda_{0}}(\Omega_{\lambda_{0}},\Phi_{0}(X,Y))$,  we will denote by
$$
  \mathscr{L}= \mathscr{L}(\mathfrak{L}) \in \mathcal{L}(\mathbb{C}^M)
$$
the \emph{global linearization}, as discussed by Lemma 10.1.1
of \cite{LGMC}, of the \emph{local Smith form} of the \textit{Schur complement} of $\mathfrak{L}(\lambda)$, whose existence follows from Theorem 7.4.1 of L\'{o}pez-G\'{o}mez and Mora-Corral  \cite{LGMC}. The following result is a substantial, very sharp,  generalization of Theorem \ref{th1.3}.

\begin{theorem}\label{th1.4} For every $\mathfrak{L}\in \mathcal{A}_{\lambda_{0}}(\Omega_{\lambda_{0}},\Phi_{0}(X,Y))$,
\begin{equation}
\label{1.13}
\chi[\mathfrak{L},\lambda_{0}]=i(\mathcal{D}^{-1}(0),\lambda I_{M} - \mathscr{L};\lambda_{0} I_{M}-\mathscr{L}).
\end{equation}
\end{theorem}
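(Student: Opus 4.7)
The strategy is to use Theorem \ref{th1.1} together with the local Smith form of the Schur complement and its global linearization to reduce \eqref{1.13} to the finite-dimensional linear case already settled in Theorem \ref{th1.3}. Concretely, the plan is to show that $\chi[\mathfrak{L},\lambda_0] = \chi[\lambda I_M - \mathscr{L},\lambda_0]$ and then invoke Theorem \ref{th1.3} directly for the operator $\mathscr{L}\in \mathcal{L}(\mathbb{C}^M)$.

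First I would fix a pair $(P,Q)$ of $\mathfrak{L}(\lambda_0)$-projections and apply Theorem \ref{th1.1} to obtain
$$\chi[\mathfrak{L},\lambda_0] = \ord_{\lambda_0}\det \mathscr{S}_{\mathfrak{L}(\lambda_0),(P,Q)}(\mathfrak{L}(\lambda)).$$
The Schur operator is a $\mathcal{C}^r$-path of operators between the finite-dimensional spaces $P(U)$ and $(I_V-Q)(V)$, both of dimension $n=\dim N[\mathfrak{L}(\lambda_0)]$, so after fixing bases it becomes a path of $n\times n$ matrices. This moves the entire computation into a finite-dimensional arena while preserving $\chi$.

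Next, by Theorem 7.4.1 of \cite{LGMC} the Schur path admits a local Smith factorization near $\lambda_0$: there exist locally invertible $\mathcal{C}^r$-paths $E(\lambda)$, $F(\lambda)$ and nonnegative integers $k_1\le\cdots\le k_n$ with
$$E(\lambda)\,\mathscr{S}_{\mathfrak{L}(\lambda_0),(P,Q)}(\mathfrak{L}(\lambda))\,F(\lambda) = \mathrm{diag}\bigl((\lambda-\lambda_0)^{k_1},\ldots,(\lambda-\lambda_0)^{k_n}\bigr).$$
Taking determinants and orders at $\lambda_0$ yields $\chi[\mathfrak{L},\lambda_0]=\sum_{i} k_i$. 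The global linearization supplied by Lemma 10.1.1 of \cite{LGMC} then produces an operator $\mathscr{L}\in \mathcal{L}(\mathbb{C}^M)$ whose linear pencil $\lambda I_M-\mathscr{L}$ has, by construction, the very same Smith invariants $\{k_i\}$ at $\lambda_0$, so that
$$\chi[\lambda I_M-\mathscr{L},\lambda_0] = \ord_{\lambda_0}\det(\lambda I_M-\mathscr{L}) = \textstyle\sum_{i} k_i = \chi[\mathfrak{L},\lambda_0],$$
where the first identity is \eqref{1.0}.

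Finally, since $\lambda I_M-\mathscr{L}$ is a straight line of the form considered in Theorem \ref{th1.3} and $\lambda_0\in\sigma(\mathscr{L})$ by construction, Theorem \ref{th1.3} gives
$$\chi[\lambda I_M-\mathscr{L},\lambda_0] = i(\mathcal{D}^{-1}(0),\lambda I_M-\mathscr{L};\lambda_0 I_M-\mathscr{L}),$$
which combined with the previous chain of equalities delivers \eqref{1.13}. I expect the main obstacle to lie in the middle step: carefully verifying that both the Schur reduction and, crucially, the global linearization of \cite{LGMC} preserve the local Smith invariants at $\lambda_0$, and hence the value of $\chi$. This is essentially a bookkeeping argument resting on the factorization theory of \cite{LGMC}, but it is the genuine bridge linking the infinite-dimensional Fredholm setting with the finite-dimensional matrix-pencil setting where Theorem \ref{th1.3} can be invoked.
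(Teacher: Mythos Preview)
Your proposal is correct and follows essentially the same route as the paper's proof: reduce to the Schur operator via Theorem~\ref{th1.1}, apply the local Smith form from \cite{LGMC}, pass to the linear pencil $\lambda I_M-\mathscr{L}$ via the global linearization lemma of \cite{LGMC}, and finish with Theorem~\ref{th1.3}. The only cosmetic difference is that the paper carries the factorization $\mathscr{S}\oplus I_{M-N}=\mathfrak{E}_1(\lambda)(\lambda I_M-\mathscr{L})\mathfrak{F}_1(\lambda)$ and compares determinants directly, whereas you phrase the matching through the Smith invariants $\sum_i k_i$; these are equivalent bookkeepings of the same argument.
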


This paper is distributed as follows. Section 2 reviews briefly the generalized algebraic multiplicity $\chi[\mathfrak{L},\lambda_0]$. In Section 3 we introduce a local notion of determinant for a general path $\mathfrak{L}\in\mathcal{C}(\Omega,\Phi_0(U,V))$ through the Schur operator
$\mathscr{S}_{\mathfrak{L}(\lambda),(P,Q)}(\mathfrak{L}(\lambda))$ associated to a projection pair
$(P,Q)$. Then, we use this concept of infinite-dimensional determinant to prove Theorem \ref{th1.1}.  Section 4 consists of the proof of Theorem \ref{th1.2}, Section 5 shows Theorem \ref{th1.3} and, finally, Section 6 proves Theorem \ref{th1.4}.  In reading this paper, it would be appropriate to have copies of  \cite{LG01} and \cite{LGMC}.

\section{Nonlinear spectral theory. Generalized algebraic multiplicity.}

\noindent Throughout this section, $\mathfrak{L}\in \mathcal{C}(\Omega,\Phi_{0}(U,V))$, where $\Omega$ is a subdomain of $\mathbb{K}\in\{\mathbb{R},\mathbb{C}\}$. According to Theorems 4.4.1 and 4.4.4 of \cite{LG01}, when $\mathfrak{L}(\lambda)$ is analytic in $\Omega$, i.e., $\mathfrak{L}\in\mathcal{H}(\Omega, \Phi_{0}(U,V))$, either $\Sigma(\mathfrak{L})=\Omega$, or $\Sigma(\mathfrak{L})$ is discrete and $\Sigma(\mathfrak{L})\subset \Alg(\mathfrak{L})$.
\par
The next concept, going back to Esquinas and L\'{o}pez-G\'{o}mez \cite{ELG},  is pivotal in Nonlinear Spectral Theory as it allows to introduce a generalized algebraic multiplicity, $\chi[\mathfrak{L},\lambda_0]$, in a rather natural manner. Subsequently, we will denote
$$
\mathfrak{L}_{j}:=\frac{1}{j!}\mathfrak{L}^{(j)}(\lambda_{0}), \quad 0\leq j\leq r,
$$
if these derivatives exist.

\begin{definition}
	\label{de2.1}
	Let $\mathfrak{L}\in \mathcal{C}^{r}(\Omega,\Phi_{0}(U,V))$ and $1\leq \kappa \leq r$. Then, a given $\lambda_{0}\in \Sigma(\mathfrak{L})$ is said to be a $\kappa$-transversal eigenvalue of $\mathfrak{L}$ if
	\begin{equation*}
	\bigoplus_{j=1}^{\kappa}\mathfrak{L}_{j}\left(\bigcap_{i=0}^{j-1}N[\mathfrak{L}_{i}]\right)
	\oplus R(\mathfrak{L}_{0})=V\;\; \hbox{with}\;\; \mathfrak{L}_{\kappa}\left(\bigcap_{i=0}^{\kappa-1}N[\mathfrak{L}_{i}]\right)\neq \{0\}.
	\end{equation*}
\end{definition}

For these eigenvalues, the \emph{algebraic multiplicity of $\mathfrak{L}$ at $\lambda_{0}$}, $\chi[\mathfrak{L},\lambda_0]$,  was introduced in \cite{ELG} through
\begin{equation*}
\chi[\mathfrak{L}, \lambda_{0}] :=\sum_{j=1}^{\kappa}j\cdot \dim \mathfrak{L}_{j}\left(\bigcap_{i=0}^{j-1}N[\mathfrak{L}_{i}]\right).
\end{equation*}
According to Theorems 4.3.2 and 5.3.3 of  \cite{LG01}, for every $\mathfrak{L}\in \mathcal{A}_{\lambda_{0}}(\Omega,\Phi_{0}(U,V))$ there exists a polynomial $\Phi: \mathbb{K}\to \mathcal{L}(U)$ with $\Phi(\lambda_{0})=I_{U}$ and an integer $\kappa\in\{1,2,...,r\}$ such that $\lambda_{0}$ is a $\kappa$-transversal eigenvalue of the path
\begin{equation}
\label{2.1}
\mathfrak{L}^{\Phi}:=\mathfrak{L}\circ\Phi\in \mathcal{C}^{r}(\Omega, \Phi_{0}(U,V)).
\end{equation}
Moreover, $\kappa$ and $\chi[\mathfrak{L}^{\Phi},\lambda_{0}]$ are independent of the curve of \emph{trasversalizing local isomorphisms} $\Phi$ chosen to transversalize $\mathfrak{L}$ at $\lambda_0$ through \eqref{2.1}, regardless $\Phi$ is a polynomial or not. Therefore, the next generalized concept of algebraic multiplicity, defined for every $\mathfrak{L}\in \mathcal{A}_{\lambda_{0}}(\Omega,\Phi_{0}(U,V))$, is consistent
\[
\chi[\mathfrak{L},\lambda_0]:= \chi[\mathfrak{L}^\Phi,\lambda_0].
\]
This concept can be extended by setting
\[
\chi[\mathfrak{L},\lambda_0] :=0 \quad \hbox{if}\;\; \lambda_0\notin\Sigma(\mathfrak{L})
\]
and
\begin{equation}
\label{2.2}
\chi[\mathfrak{L},\lambda_0]: =+\infty \quad \hbox{if}\;\; \lambda_0\in \Sigma(\mathfrak{L})
\setminus \Alg(\mathfrak{L}) \;\; \hbox{and}\;\; r=+\infty,
\end{equation}
so that $\chi[\mathfrak{L},\lambda]$ is well defined for all $\mathfrak{L}\in\mathcal{A}_{\lambda_{0}}(\Omega,\Phi_{0}(U,V))$ at any $\lambda\in \Omega$, and, in particular, for every analytic curve  $\mathfrak{L}\in\mathcal{H}(\Omega,\Phi_{0}(U,V))$.  Setting \eqref{2.2}, the algebraic multiplicity is also well defined for each smooth curve $\mathfrak{L}\in\mathcal{C}^{\infty}(\Omega,\Phi_{0}(U,V))$ at any $\lambda\in\Omega$, and it coincides with the multiplicity of Magnus \cite{Ma}. The next uniqueness result, going back to
Mora-Corral \cite{MC}, axiomatizes the concept of Algebraic Multiplicity (see also Chapter 6 of \cite{LGMC}).

\begin{theorem}
	\label{th2.2}
For every $\lambda_{0}\in\mathbb{K}$ and any open neighborhood $\Omega_{\lambda_{0}}\subset\mathbb{K}$ of $\lambda_{0}$, the algebraic multiplicity $\chi$ is the unique map 		
	\begin{equation*}
	\chi[\cdot, \lambda_{0}]: \mathcal{C}^{\infty}(\Omega_{\lambda_{0}}, \Phi_{0}(U))\longrightarrow [0,\infty]
	\end{equation*}
	such that
	\begin{enumerate}
		\item[{\rm (PF)}] For every pair $\mathfrak{L}, \mathfrak{M} \in \mathcal{C}^{\infty}(\Omega_{\lambda_{0}}, \Phi_{0}(U))$,
		\begin{equation*}
		\chi[\mathfrak{L}\circ\mathfrak{M}, \lambda_{0}]=\chi[\mathfrak{L},\lambda_{0}]+\chi[\mathfrak{M},\lambda_{0}].
		\end{equation*}
		\item[{\rm (NP)}] There exists a rank one projection $\Pi \in \mathcal{L}(U)$ such that
		\begin{equation*}
		\chi[(\lambda-\lambda_{0})\Pi +I_{U}-\Pi,\lambda_{0}]=1.
		\end{equation*}
	\end{enumerate}
\end{theorem}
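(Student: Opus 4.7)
The plan is to prove existence and uniqueness separately. For existence, I verify that the $\chi$ already constructed satisfies (PF) and (NP). The normalization (NP) follows from a direct check on $\mathfrak{L}(\lambda) = (\lambda - \lambda_0)\Pi + I - \Pi$ with $\Pi$ of rank one: here $\mathfrak{L}_0 = I_U - \Pi$ and $\mathfrak{L}_1 = \Pi$, so $\mathfrak{L}_1(N[\mathfrak{L}_0]) = \Pi(R[\Pi]) = R[\Pi]$ complements $R[\mathfrak{L}_0] = R[I_U - \Pi]$, showing $\lambda_0$ is a $1$-transversal eigenvalue in the sense of Definition \ref{de2.1} and yielding $\chi[\mathfrak{L}, \lambda_0] = \dim R[\Pi] = 1$. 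The product formula (PF) for the previously defined $\chi$ is established in \cite[Ch.~5]{LG01} and \cite[Ch.~6]{LGMC} by transversalizing $\mathfrak{L}$ and $\mathfrak{M}$ through local isomorphisms in the sense of \eqref{2.1} and summing the contributions of the two factors.

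For uniqueness, let $\tilde{\chi}$ be any other map satisfying (PF) and (NP). Applying (PF) to $I_U = I_U \circ I_U$ with $\tilde{\chi} \in [0, +\infty]$ forces $\tilde{\chi}[I_U, \lambda_0] \in \{0, +\infty\}$, and the value $+\infty$ is ruled out by composing $I_U$ with the normalized path, whose multiplicity would then become $+\infty$ instead of $1$. Hence $\tilde{\chi}[I_U, \lambda_0] = 0$, and the smooth local inverse of any $\mathfrak{L}$ with $\mathfrak{L}(\lambda_0) \in GL(U)$ combined with (PF) gives $\tilde{\chi}[\mathfrak{L}, \lambda_0] = 0$ on the whole class of local isomorphism paths. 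Consequently conjugation by constant invertibles preserves $\tilde{\chi}$, and since any two rank-one projections of $U$ are conjugate in $GL(U)$, (NP) upgrades to $\tilde{\chi}[(\lambda - \lambda_0)\Pi' + I_U - \Pi', \lambda_0] = 1$ for every rank-one projection $\Pi'$.

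The core computation is the evaluation of $\tilde{\chi}$ on Smith-type factors. For disjoint rank-one projections $\Pi_1,\dots,\Pi_n$ satisfying $\Pi_k \Pi_l = \delta_{kl} \Pi_k$, one has $(\lambda - \lambda_0)(\Pi_1 + \cdots + \Pi_n) + I_U - (\Pi_1 + \cdots + \Pi_n) = \prod_{k=1}^n [(\lambda - \lambda_0)\Pi_k + I_U - \Pi_k]$ and $[(\lambda - \lambda_0)\Pi_k + I_U - \Pi_k]^j = (\lambda - \lambda_0)^j \Pi_k + I_U - \Pi_k$; (PF) and the strengthened (NP) therefore give $\tilde{\chi}[(\lambda - \lambda_0)^j \Pi + I_U - \Pi, \lambda_0] = n j$ on any finite-rank projection $\Pi$ of rank $n$. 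Invoking Theorem 7.4.1 of \cite{LGMC}, every $\mathfrak{L} \in \mathcal{A}_{\lambda_0}(\Omega_{\lambda_0}, \Phi_0(U))$ admits a local factorization $\mathfrak{L}(\lambda) = \Psi(\lambda) \prod_{j=1}^m [(\lambda - \lambda_0)\Pi_j + I_U - \Pi_j]^{k_j}$ with $\Psi(\lambda_0) \in GL(U)$ and rank-one $\Pi_j$, so (PF) forces $\tilde{\chi}[\mathfrak{L}, \lambda_0] = \sum_j k_j$, exactly the value delivered by the same factorization applied to $\chi$.

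The main obstacle is the remaining regime $\mathfrak{L} \in \mathcal{C}^{\infty}(\Omega_{\lambda_0}, \Phi_0(U)) \setminus \mathcal{A}_{\lambda_0}(\Omega_{\lambda_0}, \Phi_0(U))$ with $\lambda_0 \in \Sigma(\mathfrak{L})$, where \eqref{2.2} assigns $\chi[\mathfrak{L}, \lambda_0] = +\infty$. Following Chapter 6 of \cite{LGMC}, the failure of estimate \eqref{1.1} for every $\kappa \in \mathbb{N}$ permits iterating the Jordan-chain reductions underlying the Smith theorem to extract, for each $N \in \mathbb{N}$, factorizations $\mathfrak{L}(\lambda) = \Psi_N(\lambda) \prod_{j=1}^N [(\lambda - \lambda_0)\Pi_j^{(N)} + I_U - \Pi_j^{(N)}] \, \mathfrak{R}_N(\lambda)$ with all factors smooth at $\lambda_0$. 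Then (PF) yields $\tilde{\chi}[\mathfrak{L}, \lambda_0] \geq N$ for every $N$, forcing $\tilde{\chi}[\mathfrak{L}, \lambda_0] = +\infty = \chi[\mathfrak{L}, \lambda_0]$; justifying this arbitrarily deep extraction outside the algebraic class is the genuinely delicate technical point, and it is the heart of the Mora-Corral argument in \cite{MC}.
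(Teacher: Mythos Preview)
The paper does not supply its own proof of Theorem~\ref{th2.2}; it merely records the statement and attributes the result to Mora-Corral \cite{MC} (see also \cite[Ch.~6]{LGMC}). So there is nothing in the paper to compare your argument against line by line.

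That said, your sketch is a faithful outline of the Mora-Corral strategy as developed in those references: deduce $\tilde{\chi}=0$ on invertible paths from (PF) and (NP), propagate (NP) to all rank-one projections, compute $\tilde{\chi}$ on the elementary Smith factors $(\lambda-\lambda_0)^j\Pi+I_U-\Pi$, and then invoke the local Smith form to settle the algebraic case and an unbounded-extraction argument for the non-algebraic one. Your step ``any two rank-one projections of $U$ are conjugate in $GL(U)$'' is correct but deserves a word of justification in the Banach setting (two closed hyperplanes $X,Y\subset U$ satisfy $X\cong (X\cap Y)\oplus\mathbb{K}\cong Y$, whence the required conjugating isomorphism exists). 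The only place where your write-up is genuinely a sketch rather than a proof is exactly where you flag it: the $\lambda_0\in\Sigma(\mathfrak{L})\setminus\Alg(\mathfrak{L})$ regime, where one must justify the existence of arbitrarily long factorizations with smooth remainder; this is precisely the technical core of \cite{MC}, and you are right to defer to it.
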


The axiom (PF) is the  \emph{product formula} and (NP) is a \emph{normalization property}
for establishing the uniqueness of $\chi$. From these two axioms one can derive the remaining properties of  $\chi$; among them, that it equals the classical algebraic multiplicity when
\begin{equation}
\label{2.3}
\mathfrak{L}(\lambda)= \lambda I_{U} - K
\end{equation}
for some compact operator $K$. Indeed, according to \cite{LGMC}, for every $\mathfrak{L}\in \mathcal{C}^{\infty}(\Omega_{\lambda_{0}},\Phi_{0}(U))$, the following properties are satisfied:
\begin{itemize}
	\item $\chi[\mathfrak{L},\lambda_{0}]\in\mathbb{N}\uplus\{+\infty\}$;
	\item $\chi[\mathfrak{L},\lambda_{0}]=0$ if and only if $\mathfrak{L}(\lambda_0)
	\in GL(U)$;
	\item $\chi[\mathfrak{L},\lambda_{0}]<\infty$ if and only if $\lambda_0 \in\Alg(\mathfrak{L})$.
	\item If $U =\mathbb{K}^N$, then, in any basis,
\begin{equation}
\label{2.4}
	\chi[\mathfrak{L},\lambda_{0}]= \mathrm{ord}_{\lambda_{0}}\det \mathfrak{L}(\lambda).
\end{equation}
	\item Let $L\in \mathcal{L}(U)$ be such that $\lambda I_U-L\in \Phi_0(U)$. Then, for each eigenvalue $\lambda_0\in \sigma(L)$, there exists $\kappa\geq 1$ such that
	\begin{equation*}
	\begin{split}
	\chi [\lambda I_U-L,\lambda_{0}]  =\underset{n\in\mathbb{N}}{\sup} \dim  N[(\lambda_{0}I_{U}-L)^{n}]  = \dim N[(\lambda_{0}I_{U}-L)^{\kappa}].
	\end{split}
	\end{equation*}
	\item Let $V$ be another complex Banach space and $\mathfrak{P}\in\mathcal{C}^{\infty}(\Omega_{\lambda_{0}},\Phi_{0}(V))$. Then,
	\begin{equation*} \chi[\mathfrak{L}\oplus\mathfrak{P},\lambda_{0}]=\chi[\mathfrak{L},\lambda_{0}]+
\chi[\mathfrak{P},\lambda_{0}].
	\end{equation*}
\end{itemize}
Therefore, $\chi$ extends, very substantially, the classical concept of algebraic multiplicity, which is only valid for operator paths of the form \eqref{2.3}.

\section{Schur complement and local determinant}

\noindent This section gives a new  equivalent definition of $\chi$,  without invoking the concept of transversalization, based on the formulation of the finite-dimensional algebraic multiplicity in terms of the order of the determinant. To accomplish this task, we need to develop an analogue of the notion of \textit{determinant} in an infinite-dimensional setting. Although the notion of determinant does not exists in such setting, we will see that it can be introduced in local neighborhoods of $\Phi_{0}(U,V)$.
\par
Throughout this section, $T\in \Phi_0(U,V)$ is an arbitrary linear Fredholm operator of index zero,
and $P\in \mathcal{L}(U)$, $Q\in \mathcal{L}(V)$ are projections onto $N[T]$ and $R[T]$, respectively; The pair $(P,Q)$ will be called a pair of $T$-projections. Then,
the following topological direct sum decompositions hold
\begin{equation}
\label{3.1}
U  =(I_{U}-P)(U)\oplus N[T],\qquad V =R[T]\oplus(I_{V}-Q)(V),
\end{equation}
and, setting
\begin{align*}
     R[I_U-P]&=(I_{U}-P)(U)\equiv N[T]^{\perp},\\ R[I_V-Q] & =(I_{V}-Q)(V)\equiv R[T]^{\perp},
\end{align*}
by Fitzpatrick and Pejsachowicz \cite[p. 286]{FP1}, for any given $L\in \Phi_{0}(U,V)$,  $L$ can be equivalently expressed as a block operator matrix
\begin{equation}
\label{3.2}
L=\left(\begin{array}{cc} L_{11} & L_{12} \\[1ex] L_{21} & L_{22} \end{array}\right)
\end{equation}
where
\begin{equation*}
\begin{array}{ll}
L_{11}:=QL(I_{U}-P), & \quad L_{12}:=QLP, \\ L_{21}:=(I_{V}-Q)L(I_{U}-P), & \quad L_{22}:=(I_{V}-Q)LP.
\end{array}
\end{equation*}
In particular, since $TP=0$ and $(I_V-Q)T=0$, the operator $T$ can be expressed as
\begin{equation*}
T=\left(\begin{array}{cc}
T_{11} & 0 \\[1ex]
0 & 0
\end{array}\right)
\end{equation*}
with $T_{11}\in GL(N[T]^{\perp}, R[T])$. Since $GL(N[T]^{\perp},R[T])$ is open in $\mathcal{L}(N[T]^{\perp},R[T])$ and $\Phi_{0}(U,V)$ is open in $\mathcal{L}(U,V)$,  there exists $\varepsilon>0$ such that, whenever $L\in \mathcal{L}(U,V)$ satisfies  $\|T-L\|<\varepsilon$, then
$L\in\Phi_{0}(U,V)$, and it can be expressed as \eqref{3.2} with  $L_{11}\in GL(N[T]^{\perp},R[T])$.
\par
In this context, the \textit{Schur operator} of $T$ associated to the projection pair $(P,Q)$ can be defined through
\begin{equation*}
\begin{array}{lccl}
\mathscr{S}_{T,(P,Q)}: & B_{\varepsilon}(T)\subset\Phi_{0}(U,V) & \longrightarrow & \mathcal{L}(N[T],R[T]^{\perp}) \\ & L & \mapsto & L_{22}-L_{21}L_{11}^{-1}L_{12}
\end{array}
\end{equation*}
where $B_{\varepsilon}(T)$ stands for the open ball of radius $\varepsilon$ centered at $T\in \Phi_0(U,V)$ in $\mathcal{L}(U,V)$. We call this map \textit{Schur operator} by analogy, in the  Euclidean space, to the \textit{Schur complement} of a matrix. Indeed, given any block matrix
\begin{equation}
\label{3.3}
M=\left(\begin{array}{cc} A & B \\[1ex] C & D \end{array}\right)
\end{equation}
with $A\in GL(\mathbb{K}^{n})$, $B\in\Mat_{n\times m} (\mathbb{K})$, $C\in\Mat_{m\times n}(\mathbb{K})$ and $D\in \Mat_{m}(\mathbb{K})$, the \textit{Schur complement} of the matrix $D$ in $M$  is the matrix $M/A\in\Mat_{m}(\mathbb{K})$ defined by
\begin{equation*}
M/ A:=D-CA^{-1}B.
\end{equation*}
By  a lemma of Banachiewicz \cite[p. 50]{B}, the Schur complement satisfies the identity
\begin{equation*}
\det(M)=\det(A)\cdot\det(M/ A)
\end{equation*}
for every block matrix \eqref{3.3}. Based on this feature, we introduce the following local notion of determinant.

\begin{definition}
\label{de3.1}
Let $T\in\Phi_{0}(U,V)$ and $(P,Q)$ be a pair of  $T$-projections. Then, for sufficiently small $\varepsilon>0$, we define the local determinant functional by
\begin{equation*}
	\begin{array}{lccl}
	\mathcal{D}_{T,(P,Q)}: & B_{\varepsilon}(T)\subset \Phi_{0}(U,V) & \longrightarrow & \mathbb{K} \\	& L & \mapsto & \det(L_{11})\cdot\det(\mathscr{S}_{T,(P,Q)}(L)).	\end{array}
\end{equation*}
\end{definition}

\noindent The next result establishes that  the local determinant functional indeed behaves as a (local) determinant on $B_{\varepsilon}(T)$.

\begin{theorem}
\label{th3.2}
Let $T\in\Phi_{0}(U,V)$ and $(P,Q)$ be a pair of $T$-projections. Then, for sufficiently small $\epsilon>0$ and every $L\in B_{\varepsilon}(T)$,
$$
  L\in GL(U,V) \text{ if and only if }  \mathcal{D}_{T,(P,Q)}(L)\neq0.
$$
\end{theorem}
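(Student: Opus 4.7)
The plan is to use the classical Schur/Banachiewicz block factorization to reduce the invertibility of $L$ to the invertibility of the two diagonal blocks $L_{11}$ and $\mathscr{S}_{T,(P,Q)}(L)$, one of which (the finite-dimensional Schur complement) is governed by an honest determinant. The first step is to fix the size of $\varepsilon$: since $GL(N[T]^{\perp},R[T])$ is open in $\mathcal{L}(N[T]^{\perp},R[T])$ and the map $L\mapsto L_{11}$ is continuous, we can shrink $\varepsilon$ so that $L_{11}\in GL(N[T]^{\perp},R[T])$ for every $L\in B_{\varepsilon}(T)$; this is exactly the regime in which $\mathscr{S}_{T,(P,Q)}(L)$ is defined.

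Next I would write down the formal identity
\begin{equation*}
L=\begin{pmatrix} I & 0 \\ L_{21}L_{11}^{-1} & I \end{pmatrix}\begin{pmatrix} L_{11} & 0 \\ 0 & \mathscr{S}_{T,(P,Q)}(L) \end{pmatrix}\begin{pmatrix} I & L_{11}^{-1}L_{12} \\ 0 & I \end{pmatrix}
\end{equation*}
with respect to the decompositions in \eqref{3.1}, and verify it by multiplying out the blocks using the definition of $\mathscr{S}_{T,(P,Q)}(L)$. The two unipotent triangular factors are invertible in $\mathcal{L}(U)$ and $\mathcal{L}(V)$, respectively, with explicit triangular inverses obtained by flipping the sign of the off-diagonal entry. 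Therefore $L\in GL(U,V)$ if and only if the block-diagonal middle factor is invertible, i.e.\ if and only if both $L_{11}\in GL(N[T]^{\perp},R[T])$ and $\mathscr{S}_{T,(P,Q)}(L)\in GL(N[T],R[T]^{\perp})$ hold simultaneously.

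Having chosen $\varepsilon$ as above, the first of these conditions is automatic on $B_{\varepsilon}(T)$, so the invertibility of $L$ is equivalent to the invertibility of $\mathscr{S}_{T,(P,Q)}(L)$. Since $T$ is Fredholm of index zero, $\dim N[T]=\dim R[T]^{\perp}<\infty$, and the Schur operator is a linear map between two finite-dimensional spaces of the same dimension; consequently its invertibility is equivalent to $\det\mathscr{S}_{T,(P,Q)}(L)\neq 0$. Combined with the fact that $L_{11}$ is invertible throughout $B_{\varepsilon}(T)$ (so its determinant factor in $\mathcal{D}_{T,(P,Q)}(L)$ is nonzero in the corresponding sense), this yields the equivalence $L\in GL(U,V)\iff \mathcal{D}_{T,(P,Q)}(L)\neq 0$.

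The only genuine delicacy, and the step I would treat most carefully, is the interpretation of $\det(L_{11})$ when $N[T]^{\perp}$ and $R[T]$ are infinite-dimensional. In $B_{\varepsilon}(T)$ one has $L_{11}\in GL(N[T]^{\perp},R[T])$, so this factor should be read as a nonzero symbol (or, equivalently, as a Fredholm-type determinant of $T_{11}^{-1}L_{11}$ after fixing the reference isomorphism $T_{11}$); the rest of the argument is then a routine finite-dimensional determinant computation on the Schur block, and the proof is complete.
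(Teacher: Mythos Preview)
Your argument is correct and follows essentially the same route as the paper: both proofs hinge on the Banachiewicz block factorization \eqref{3.4}, note that the unipotent triangular factors are invertible, and conclude that $L\in GL(U,V)$ if and only if the block-diagonal middle factor is invertible, which (since $L_{11}$ is automatically an isomorphism on $B_\varepsilon(T)$) reduces to the finite-dimensional condition $\det\mathscr{S}_{T,(P,Q)}(L)\neq 0$. Your remark about the meaning of $\det(L_{11})$ in infinite dimensions is well taken; the paper is tacit on this point, and in effect treats that factor as a nonvanishing formal symbol throughout $B_\varepsilon(T)$, exactly as you suggest.
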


\begin{proof}
Let $L\in B_{\varepsilon}(T)$ be. Then, the Schur operator of $L$ is well defined. Moreover, $L$ can be decomposed as 	
\begin{equation}
\label{3.4}
	L=\left(\begin{array}{cc}
	I_{R[T]} & 0 \\[1ex]
	L_{21}L^{-1}_{11} & I_{R[T]^{\perp}}
	\end{array}\right)\left(\begin{array}{cc}
	L_{11}& 0 \\[1ex]
	0 & \mathscr{S}_{T,(P,Q)}(L)
	\end{array}\right)\left(\begin{array}{cc}
	I_{N[T]^{\perp}} & L^{-1}_{11}L_{12} \\[1ex]
	0 & I_{N[T]}
	\end{array}\right).
\end{equation}
Clearly, the first and the third matrix operators of \eqref{3.4} are invertible. Thus,
taking inverses, we find that  	
\begin{equation}
\label{3.5}
\begin{split}
	L_{11}\oplus\mathscr{S}_{T,(P,Q)}(L) & =\left(\begin{array}{cc}
	L_{11} & 0 \\[1ex]
	0 & \mathscr{S}_{T,(P,Q)}(L)
	\end{array}\right)\\ & = \left(\begin{array}{cc}
	I_{R[T]} & 0 \\[1ex]
	L_{21}L^{-1}_{11} & I_{R[T]^{\perp}}
	\end{array}\right)^{-1} L\left(\begin{array}{cc}
	I_{N[T]^{\perp}} & L^{-1}_{11}L_{12} \\[1ex]
	0 & I_{N[T]} 	\end{array}\right)^{-1}.
\end{split}
\end{equation}
If $L\in GL(U,V)$, then, due to \eqref{3.5},  $L_{11}\oplus\mathscr{S}_{T,(P,Q)}\in GL(U,V)$ because it is a composition of invertible operators. So, since $L_{11}\in  GL(N[T]^{\perp},R[T])$, necessarily
$$
  \mathscr{S}_{T,(P,Q)}(L)\in GL(N[T],R[T]^{\perp})
$$
and therefore $\mathcal{D}_{T,(P,Q)}(L)\neq0$. To prove the converse, assume that  $\mathcal{D}_{T,(P,Q)}(L)=0$. Then,
$$
  \mathscr{S}_{T,(P,Q)}(L)\notin GL(N[T],R[T]^{\perp})
$$
and hence,
$$
   L_{11}\oplus\mathscr{S}_{T,(P,Q)}(L)\notin GL(U,V).
$$
Consequently, by \eqref{3.5}, $L\notin GL(U,V)$. This concludes the proof.
\end{proof}

Thanks to this (local)  notion of determinant, the generalized algebraic multiplicity can be also expressed in the vein of \eqref{2.4} even in an infinite-dimensional setting.
Precisely, the next result holds.

\begin{theorem}
\label{th3.3}
Assume  $\mathfrak{L}\in\mathcal{A}_{\lambda_{0}}(\Omega_{\lambda_{0}},\Phi_{0}(X,Y))$, i.e.
$\mathfrak{L}\in\mathcal{C}^{r}(\Omega,\Phi_{0}(U,V)$ with  $\lambda_{0}\in\Alg_{\kappa}(\mathfrak{L})$ for some $1\leq \kappa \leq r$ and $r\in\mathbb{N}$. Then,
\begin{equation}
\label{3.6}
 \chi[\mathfrak{L},\lambda_{0}]=\ord_{\lambda_{0}}
 \mathcal{D}_{\mathfrak{L}(\lambda_{0}),(P,Q)}(\mathfrak{L}(\lambda))=
 \ord_{\lambda_{0}}\det\mathscr{S}_{\mathfrak{L}(\lambda_{0}),(P,Q)}(\mathfrak{L}(\lambda))
\end{equation}
for every pair $(P,Q)$ of $\mathfrak{L}(\lambda_{0})$-projections.
\end{theorem}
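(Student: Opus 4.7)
The plan is to prove \eqref{3.6} by exploiting the block factorization \eqref{3.4} together with the multiplicative behaviour of $\chi$. The second equality follows almost immediately from Definition \ref{de3.1}: since $L_{11}(\lambda_0)=T_{11}$ lies in $GL(N[\mathfrak{L}(\lambda_0)]^{\perp},R[\mathfrak{L}(\lambda_0)])$ and $L_{11}(\lambda)$ depends $\mathcal{C}^{r}$-continuously on $\lambda$, invertibility is preserved throughout a neighbourhood $U_{\lambda_0}\subset\Omega_{\lambda_0}$ of $\lambda_0$. Hence $\det L_{11}(\lambda)$ is non-vanishing near $\lambda_0$, so $\ord_{\lambda_0}\det L_{11}(\lambda)=0$ and the contribution of the $L_{11}$-factor to $\ord_{\lambda_0}\mathcal{D}_{\mathfrak{L}(\lambda_0),(P,Q)}(\mathfrak{L}(\lambda))$ is null.

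To establish the first equality, I would apply the factorization of \eqref{3.4} pointwise on $U_{\lambda_0}$, namely
\[
\mathfrak{L}(\lambda)=E_1(\lambda)\,\bigl[L_{11}(\lambda)\oplus \mathscr{S}_{\mathfrak{L}(\lambda_0),(P,Q)}(\mathfrak{L}(\lambda))\bigr]\,E_2(\lambda),
\]
where $E_1(\lambda)\in\mathcal{L}(V)$ and $E_2(\lambda)\in\mathcal{L}(U)$ are the unipotent lower- and upper-triangular block factors with entries $L_{21}(\lambda)L_{11}^{-1}(\lambda)$ and $L_{11}^{-1}(\lambda)L_{12}(\lambda)$, respectively. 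Both $E_1$ and $E_2$ are $\mathcal{C}^{r}$-curves whose values are invertible on all of $U_{\lambda_0}$; in particular, $E_1(\lambda_0)=I_V$ and $E_2(\lambda_0)=I_U$. By the product formula (PF) of Theorem \ref{th2.2} together with the direct-sum additivity of $\chi$ on different Banach spaces recalled in Section 2, one obtains
\[
\chi[\mathfrak{L},\lambda_0]=\chi[E_1,\lambda_0]+\chi[L_{11},\lambda_0]+\chi[\mathscr{S}_{\mathfrak{L}(\lambda_0),(P,Q)}(\mathfrak{L}(\cdot)),\lambda_0]+\chi[E_2,\lambda_0].
\]
The first, second and fourth summands vanish since $E_1$, $L_{11}$ and $E_2$ are pointwise invertible on $U_{\lambda_0}$. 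The remaining term involves the \emph{finite-dimensional} path $\mathscr{S}_{\mathfrak{L}(\lambda_0),(P,Q)}(\mathfrak{L}(\cdot)):U_{\lambda_0}\to \mathcal{L}(N[\mathfrak{L}(\lambda_0)],R[\mathfrak{L}(\lambda_0)]^{\perp})$, where the source and target have the same finite dimension $\dim N[\mathfrak{L}(\lambda_0)]$; hence, by the finite-dimensional characterization \eqref{2.4} of $\chi$, it equals $\ord_{\lambda_0}\det \mathscr{S}_{\mathfrak{L}(\lambda_0),(P,Q)}(\mathfrak{L}(\lambda))$, which closes the argument and simultaneously exhibits the independence of the right-hand side of \eqref{3.6} on the pair $(P,Q)$.

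The main obstacle I anticipate is justifying the product formula and direct-sum additivity in the $\mathcal{C}^{r}$-admissible setting, since Mora-Corral's axiomatization (Theorem \ref{th2.2}) is formulated in $\mathcal{C}^{\infty}(\Omega_{\lambda_0},\Phi_0(U))$. A clean way to bypass this is to first reduce $\mathfrak{L}$ to a $\kappa$-transversal path via the transversalizing polynomial $\Phi$ provided by \cite[Ch.~4]{LG01}, so that $\chi[\mathfrak{L},\lambda_0]=\chi[\mathfrak{L}^{\Phi},\lambda_0]$ is computed directly from Definition \ref{de2.1}, and then to verify, by unravelling the Jordan-chain structure of $\mathfrak{L}^{\Phi}$ at $\lambda_0$, that the Taylor expansion of $\det\mathscr{S}_{\mathfrak{L}(\lambda_0),(P,Q)}(\mathfrak{L}^{\Phi}(\lambda))$ at $\lambda_0$ has order exactly $\sum_{j=1}^{\kappa}j\cdot\dim\mathfrak{L}^{\Phi}_{j}\!\left(\bigcap_{i<j}N[\mathfrak{L}^{\Phi}_{i}]\right)$. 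Invariance of both sides under the choice of $\Phi$ then yields the general statement. A secondary, purely notational, subtlety is assigning a precise meaning to $\det L_{11}(\lambda)$ in the infinite-dimensional setting, but this is harmless, because it is used only through its local non-vanishing near $\lambda_0$, which is already guaranteed by the open character of $GL(N[\mathfrak{L}(\lambda_0)]^{\perp},R[\mathfrak{L}(\lambda_0)])$.
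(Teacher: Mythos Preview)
Your main line of argument---the block factorization \eqref{3.4}, the product formula, direct-sum additivity, and the finite-dimensional determinant identity \eqref{2.4}---is exactly what the paper does. Where you and the paper diverge is on the technical point you yourself flag. Rather than detouring through a transversalizing polynomial $\Phi$ and Jordan-chain bookkeeping, the paper first establishes directly that $\lambda_0\in\Alg_\kappa(\mathscr{S}_{\mathfrak{L}(\lambda_0),(P,Q)}\circ\mathfrak{L})$: inverting the factorization \eqref{3.4} gives
\[
L_{11}^{-1}(\lambda)\oplus\bigl[\mathscr{S}_{\mathfrak{L}(\lambda_0),(P,Q)}(\mathfrak{L}(\lambda))\bigr]^{-1}
=\mathfrak{L}_2(\lambda)\,\mathfrak{L}^{-1}(\lambda)\,\mathfrak{L}_1(\lambda),
\]
and since $\mathfrak{L}_1,\mathfrak{L}_2$ are bounded near $\lambda_0$ while $\lambda_0\in\Alg_\kappa(\mathfrak{L})$, one reads off $\|\mathscr{S}^{-1}(\mathfrak{L}(\lambda))\|\leq C|\lambda-\lambda_0|^{-\kappa}$. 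Once this is in hand, the Schur path is itself admissible in the $\mathcal{C}^r$-sense, so $\chi$ is well defined on it and the product-formula/direct-sum properties (which in \cite{LGMC} are proved for admissible paths, not only $\mathcal{C}^\infty$ ones) apply without appeal to Theorem~\ref{th2.2} or to transversalization. This is both shorter and more in keeping with the spirit of the section, whose point is precisely to bypass the transversalization machinery; your proposed workaround would reinstate it.
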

\begin{proof}
First, we will show that, for every pair $(P,Q)$ of $\mathfrak{L}(\lambda_{0})$-projections,
\begin{equation}
\label{3.7}
   \lambda_{0}\in\Alg_{\kappa}(\mathscr{S}_{\mathfrak{L}(\lambda_{0}),(P,Q)}\circ\mathfrak{L}).
\end{equation}
Indeed, let $(P,Q)$ be a pair of $\mathfrak{L}(\lambda_{0})$-projections. Then,
\begin{equation*}
	\mathfrak{L}(\lambda)=\left(\begin{array}{cc}
	L_{11}(\lambda) & L_{12}(\lambda) \\[1ex] 	L_{21}(\lambda) & L_{22}(\lambda) 	\end{array}\right),
\end{equation*}
where
\begin{equation*}
\begin{array}{ll}
L_{11}(\lambda):=Q\mathfrak{L}(\lambda) (I_{U}-P), & L_{12}(\lambda):=Q\mathfrak{L}(\lambda)P, \\[1ex] L_{21}(\lambda):=(I_{V}-Q)\mathfrak{L}(\lambda)(I_{U}-P), & L_{22}(\lambda):=(I_{V}-Q)\mathfrak{L}(\lambda)P.
\end{array}
\end{equation*}
Since
$$
   L_{11}(\lambda_0)\in GL(N[\mathfrak{L}(\lambda_0)]^{\perp},R[\mathfrak{L}(\lambda_0)]),
$$
we can take $\Omega_{\lambda_0}$ sufficiently small so that
$$
   L_{11}(\lambda)\in GL(N[\mathfrak{L}(\lambda_0)]^{\perp},
   R[\mathfrak{L}(\lambda_0)])\quad \hbox{for all}\;\; \lambda\in \Omega_{\lambda_0}.
$$
Thus, by \eqref{3.5}, it becomes apparent that
\begin{equation}
\label{3.8}	
L_{11}(\lambda)\oplus\mathscr{S}_{\mathfrak{L}(\lambda_{0}),(P,Q)}(\mathfrak{L}(\lambda)) =  \mathfrak{L}_{1}^{-1}(\lambda)\mathfrak{L}(\lambda)\mathfrak{L}^{-1}_{2}(\lambda)
\quad \hbox{for all}\;\;\lambda\in\Omega_{\lambda_0},
\end{equation}
where
\begin{equation}
\label{3.9}
	\mathfrak{L}_{1}(\lambda) :=\left(\!\! \begin{array}{cc}
	I_{R[\mathfrak{L}(\lambda_0)]} & 0 \\[1ex]
	L_{21}(\lambda)L^{-1}_{11}(\lambda) & I_{R[\mathfrak{L}(\lambda_0)]^{\perp}}
	\end{array}\!\! \right), \;\; \mathfrak{L}_{2}(\lambda)  :=\left(\!\!\begin{array}{cc}
	I_{N[\mathfrak{L}(\lambda_0)]^{\perp}} & L^{-1}_{11}(\lambda)L_{12}(\lambda) \\[1ex]
	0 & I_{N[\mathfrak{L}(\lambda_0)]} 	\end{array}\!\!\right).
\end{equation}
Therefore, inverting \eqref{3.8} yields
\begin{equation}
\label{iii.10}
 {L}_{11}^{-1}(\lambda)\oplus[\mathscr{S}_{\mathfrak{L}(\lambda_{0}),(P,Q)}(\mathfrak{L}(\lambda))]^{-1}
 =\mathfrak{L}_{2}(\lambda)\mathfrak{L}^{-1}(\lambda)\mathfrak{L}_{1}(\lambda)
\end{equation}
for all $\lambda\in \Omega_{\lambda_0}\setminus\{\lambda_0\}$. Since
$$
  \mathfrak{L}(\Omega_{\lambda_{0}}\setminus\{\lambda_{0}\})\subset GL(U,V),
$$
by Theorem \ref{th3.2},  we have that, for every $\lambda\in\Omega_{\lambda_{0}}\setminus \{\lambda_{0}\}$,
$$
  \mathscr{S}_{\mathfrak{L}(\lambda_{0}),(P,Q)}(\mathfrak{L}(\lambda))\in GL(N[\mathfrak{L}(\lambda_0)],R[\mathfrak{L}(\lambda_0)]^{\perp}).
$$
On the one hand, since $\lambda_{0}\in\Alg_{\kappa}(\mathfrak{L})$, it is apparent that
\begin{equation}
\label{iii.11}
	\|\mathfrak{L}_{2}(\lambda)\mathfrak{L}^{-1}(\lambda)\mathfrak{L}_1(\lambda)\|\leq C\|\mathfrak{L}^{-1}(\lambda)\|\leq\frac{C}{|\lambda-\lambda_{0}|^{\kappa}}.
\end{equation}
Moreover,
\begin{equation}
\label{iii.12}
\|[\mathscr{S}_{\mathfrak{L}(\lambda_{0}),(P,Q)}(\mathfrak{L}(\lambda))]^{-1}\| \leq
 \|L_{11}^{-1}(\lambda)\oplus[\mathscr{S}_{\mathfrak{L}(\lambda_{0}),(P,Q)}(\mathfrak{L}(\lambda))]^{-1}\|.
\end{equation}
Consequently, taking norms in \eqref{iii.10}, we may conclude from \eqref{iii.12} and
\eqref{iii.11} that
\begin{equation*}
 \|[\mathscr{S}_{\mathfrak{L}(\lambda_{0}),(P,Q)}(\mathfrak{L}(\lambda))]^{-1}\|
 \leq\frac{C}{|\lambda-\lambda_{0}|^{k}}\quad \hbox{for all} \;\; \lambda\in\Omega_{\lambda_{0}}\setminus\{\lambda_{0}\},
\end{equation*}
which ends the proof of \eqref{3.7}.
\par
According to \eqref{3.8},  we already know that
\begin{equation}
\label{iii.13}
  \mathfrak{L}(\lambda)=\mathfrak{L}_{1}(\lambda)\left[ L_{11}(\lambda)\oplus
  \mathscr{S}_{\mathfrak{L}(\lambda_{0}),(P,Q)}(\mathfrak{L}(\lambda))\right]
  \mathfrak{L}_{2}(\lambda),
\end{equation}
with $\mathfrak{L}_{1}(\lambda), \mathfrak{L}_{2}(\lambda)\in GL(U,V)$ for all $\lambda\in\Omega_{\lambda_0}$. Thus, by the properties of $\chi$ listed at the end of
Section 3, it is apparent that
$$
   \chi[\mathfrak{L}_{1},\lambda_{0}]=\chi[\mathfrak{L}_{2},\lambda_{0}]=0.
$$
Moreover, since $L_{11}(\lambda)\in GL(U,V)$ for all $\lambda\in\Omega_{\lambda_{0}}$,
\begin{align*}
\chi[L_{11}(\lambda)\oplus(\mathscr{S}_{\mathfrak{L}(\lambda_{0}),(P,Q)}\circ\mathfrak{L}),\lambda_{0}]
 & =\chi[L_{11}(\lambda),\lambda_{0}]+\chi[\mathscr{S}_{\mathfrak{L}(\lambda_{0}),(P,Q)}
\circ\mathfrak{L},\lambda_{0}] \\ & =\chi[\mathscr{S}_{\mathfrak{L}(\lambda_{0}),(P,Q)}
\circ\mathfrak{L},\lambda_{0}].
\end{align*}
Observe that $\chi[\mathscr{S}_{\mathfrak{L}(\lambda_{0}),\mathcal{P}}\circ\mathfrak{L},\lambda_{0}]$ is well defined by \eqref{3.7}. Therefore,  applying the product formula to \eqref{iii.13} yields
\begin{equation*}
\chi[\mathfrak{L},\lambda_{0}]=\chi[\mathfrak{L}_{1},\lambda_{0}]+
\chi[L_{11}\oplus(\mathscr{S}_{\mathfrak{L}(\lambda_{0}),(P,Q)}\circ\mathfrak{L}),\lambda_{0}]
+\chi[\mathfrak{L}_{2},\lambda_{0}]=
\chi[\mathscr{S}_{\mathfrak{L}(\lambda_{0}),(P,Q)}\circ\mathfrak{L},\lambda_{0}].
\end{equation*}
Finally, since $\mathscr{S}_{\mathfrak{L}(\lambda_{0}),(P,Q)}\circ\mathfrak{L}$ is finite-dimensional and $L_{11}(\lambda)$ is an isomorphism, it follows from \eqref{2.4} and Definition \ref{de3.1}  that
\begin{align*}
\chi[\mathscr{S}_{\mathfrak{L}(\lambda_{0}),(P,Q)}\circ\mathfrak{L},\lambda_{0}]& =\ord_{\lambda_{0}}\mathcal{D}\mathscr{S}_{\mathfrak{L}(\lambda_{0}),(P,Q)}(\mathfrak{L}(\lambda))
\\ &= \ord_{\lambda_{0}}\mathcal{D} \mathscr{S}_{\mathfrak{L}(\lambda_{0}),(P,Q)}(\mathfrak{L}(\lambda))
\cdot\mathcal{D} L_{11}(\lambda) \\
& =\ord_{\lambda_{0}}\mathcal{D}_{\mathfrak{L}(\lambda_{0}),(P,Q)}(\mathfrak{L}(\lambda)).
\end{align*}
Lastly, taking into account that
$$
   \chi[\mathscr{S}_{\mathfrak{L}(\lambda_{0}),(P,Q)}\circ\mathfrak{L},\lambda_{0}] =
    \ord_{\lambda_{0}}\det\mathscr{S}_{\mathfrak{L}(\lambda_{0}),(P,Q)}(\mathfrak{L}(\lambda)),
$$
the proof is completed.
\end{proof}

To conclude this section, we are going to complete the proof of Theorem \ref{th1.1}. Note that Theorem \ref{th3.3} establishes the validity of the first identity of \eqref{1.7}. Assume  $\mathfrak{L}\in\mathcal{C}(\Omega,\Phi_{0}(U,V))$, let $\lambda_0\in\Omega\cap\Sigma(\mathfrak{L})$ be an isolated eigenvalue, and consider a pair $(P,Q)$ of $\mathfrak{L}(\lambda_0)$-projections.
According to \eqref{3.8}, we already know that
\begin{equation*}
	\mathfrak{L}(\lambda)=\mathfrak{L}_1(\lambda) \left(\begin{array}{cc}
	L_{11}(\lambda)& 0 \\[1ex] 	0 & \mathscr{S}_{\mathfrak{L}(\lambda_{0}),(P,Q)}(\mathfrak{L}(\lambda))
	\end{array}\right)\mathfrak{L}_2(\lambda),
\end{equation*}
where
\begin{equation*}
	\mathfrak{L}_{1}(\lambda) :=\left(\begin{array}{cc}
	I_{R[\mathfrak{L}(\lambda_0)]} & 0 \\[1ex]
	L_{21}(\lambda)L^{-1}_{11}(\lambda) & I_{R[\mathfrak{L}(\lambda_0)]^{\perp}}
	\end{array}\right), \;\; \mathfrak{L}_{2}(\lambda)  :=\left(\begin{array}{cc}
	I_{N[\mathfrak{L}(\lambda_0)]^{\perp}} & L^{-1}_{11}(\lambda)L_{12}(\lambda) \\[1ex]
	0 & I_{N[\mathfrak{L}(\lambda_0)]} 	\end{array}\right).
\end{equation*}
Thus, in a perforated neighborhood of $\lambda_0$ the inverse $\mathfrak{L}^{-1}(\lambda)$ is given through
\begin{equation*}
	\left(\!\!\!\begin{array}{cc}
	I_{N[\mathfrak{L}(\lambda_0)]^{\perp}} & -L_{11}^{-1}(\lambda)L_{12}(\lambda) \\[1ex]
	0 & I_{N[\mathfrak{L}(\lambda_0)]}  	\end{array}\!\!\! \right)\!\!\!\left(\!\!\!\begin{array}{cc}
	L_{11}^{-1}(\lambda)& 0 \\[1ex] 	
    0 & \mathscr{S}^{-1}_{\mathfrak{L}(\lambda_{0}),(P,Q)}(\mathfrak{L}(\lambda))
	\end{array}\!\!\!\right)\!\!\!\left(\!\!\!\begin{array}{cc} I_{R[\mathfrak{L}(\lambda_0)]}
     & 0 \\[1ex] 	-L_{21}(\lambda)L^{-1}_{11}(\lambda) & I_{R[\mathfrak{L}(\lambda_0)]^\perp}
	\end{array}\!\!\!\right).
\end{equation*}
Multiplying these operator matrices, it becomes apparent that
\begin{equation*}
	\mathfrak{L}^{-1}= \left(\begin{array}{cc}	 L_{11}^{-1}+L_{11}^{-1}L_{12}\mathscr{S}^{-1}_{\mathfrak{L}(\lambda_{0}),(P,Q)}(\mathfrak{L})  L_{21}L_{11}^{-1} & - L_{11}^{-1}L_{12} \mathscr{S}^{-1}_{\mathfrak{L}(\lambda_{0}),(P,Q)} (\mathfrak{L}) \\-\mathscr{S}^{-1}_{\mathfrak{L}(\lambda_{0}),(P,Q)}(\mathfrak{L}) L_{21}L_{11}^{-1} & \mathscr{S}^{-1}_{\mathfrak{L}(\lambda_{0}),(P,Q)}(\mathfrak{L}) \end{array}\right).
\end{equation*}
Therefore, since
\begin{equation*}
	\mathfrak{L}^{-1}(\lambda)=\left(\begin{array}{cc}
	Q\mathfrak{L}^{-1}(\lambda) (I_{U}-P) & Q\mathfrak{L}^{-1}(\lambda)P \\[1ex] 	
    (I_{V}-Q)\mathfrak{L}^{-1}(\lambda)(I_{U}-P) & (I_{V}-Q)\mathfrak{L}^{-1}(\lambda)P \end{array}\right),
\end{equation*}
identifying the last entries of these two operator matrices, \eqref{1.6} holds.
Therefore, by Theorem \ref{th3.3}, \eqref{1.7} follows readily if $\mathfrak{L}\in\mathcal{A}_{\lambda_{0}}(\Omega_{\lambda_{0}},\Phi_{0}(X,Y))$. This ends the proof
of Theorem \ref{th1.1}.

\section{Proof of Theorem \ref{th1.2}}

\noindent First, we will  recover $\chi$ from the perspective
of Nonlinear Analysis, by adopting the point of view of Bifurcation Theory. Actually, this was the methodology of Esquinas and L\'{o}pez-G\'{o}mez \cite{ELG} to construct $\chi$. In applications one is naturally
lead to deal with nonlinear equations of the form
\begin{equation}
\label{iv.1}
  \mathscr{F}(\lambda,u)\equiv \mathfrak{L}(\lambda)u+\mathfrak{N}(\lambda,u)=0,
\end{equation}
where $\mathfrak{L}\in \mathcal{C}^1(\Omega,\Phi_{0}(U,V))$ for some subdomain $\Omega$ of
$\mathbb{K}$, and $\mathfrak{N}\in \mathcal{C}^1(\Omega\times U,V)$ satisfies
\begin{equation}
\label{iv.2}
  \mathfrak{N}(\lambda,0)=0\quad\hbox{and}\quad D_u\mathfrak{N}(\lambda,0)=0
\end{equation}
for all $\lambda$ in a neighborhood of a given $\lambda_0\in\Omega$. Then,
$\mathscr{F}(\lambda,0)=0$ for $\lambda$ sufficiently close to $\lambda_0$ and, for any
given pair $(P,Q)$ of $\mathfrak{L}(\lambda_0)$-projections, the equation \eqref{iv.1}
can be equivalently written as
\begin{equation}
\label{iv.3}
   \left\{ \begin{array}{l}
    Q \mathfrak{L}(\lambda)(x+y)+Q \mathfrak{N}(\lambda,x+y)= 0,\\[1ex]
    (I_V-Q) \mathfrak{L}(\lambda)(x+y)+(I_V-Q) \mathfrak{N}(\lambda,x+y)= 0,
    \end{array}\right.
\end{equation}
where we are setting
$$
  x :=Pu\in N[\mathfrak{L}(\lambda_0)],\quad y:=(I_U-P)u \in R[I_U-P]\equiv N[\mathfrak{L}(\lambda_0)]^\perp.
$$
Subsequently, we consider the map $\mathscr{G}$ defined by
$$
  \mathscr{G}(\lambda,x,y) :=  Q \mathfrak{L}(\lambda)(x+y)+Q \mathfrak{N}(\lambda,x+y)
$$
for every $(\lambda,x,y)$ in a neighborhood, $\mathcal{O}$,  of $(\lambda_0,0,0)$ in
$$
   \Omega\times N[\mathfrak{L}(\lambda_0)]\times N[\mathfrak{L}(\lambda_0)]^\perp\thickapprox \Omega\times U.
$$
The map $\mathscr{G}$ is of class $\mathcal{C}^1(\mathcal{O})$ and it satisfies
$$
  D_y\mathscr{G}(\lambda,0,0)=Q\mathfrak{L}(\lambda)(I_U-P)\equiv L_{11}(\lambda)
$$
for $\lambda$ in a certain neighborhood, $\tilde\Omega\subset\Omega$,  of $\lambda_0$,
where we can assume that
$$
   L_{11}(\lambda)\in GL(N[\mathfrak{L}(\lambda_0)]^{\perp}, R[\mathfrak{L}(\lambda_0)]).
$$
Moreover, $\mathscr{G}(\lambda,0,0)=0$ for all $\lambda\in \tilde \Omega$. Thus, since
$$
 D_y\mathscr{G}(\lambda,0,0)=L_{11}(\lambda) \in GL(N[\mathfrak{L}(\lambda_0)]^{\perp},R[\mathfrak{L}(\lambda_0)]),
$$
by the Implicit Function Theorem, there exist a neighborhood,
$\mathcal{U}$, of $(\lambda_0,0)$ in $\mathbb{K}\times N[\mathfrak{L}(\lambda_0)]$ and a
map $\mathscr{Y}\in\mathcal{C}^1(\mathcal{U},R[\mathfrak{L}(\lambda_0)])$  such that
\begin{equation*}
  \mathscr{Y}(\lambda,0)=0\quad\hbox{and}\quad \mathscr{G}(\lambda,x,\mathscr{Y}(\lambda,x))=0
\end{equation*}
for all $(\lambda,x)\in \mathcal{U}$, and these are the unique zeroes of $\mathscr{G}$
in $\mathcal{U}$. So, $\mathscr{Y}(\lambda,0)=0$ for $\lambda$ sufficiently close to $\lambda_0$,
because $\mathscr{G}(\lambda,0,0)=0$. Furthermore, by the definition of $\mathscr{G}$,  we have that, for every $(\lambda,x)\in\mathcal{U}$,
\begin{equation}
\label{iv.4}
 Q \mathfrak{L}(\lambda)(x+\mathscr{Y}(\lambda,x))+Q \mathfrak{N}(\lambda,x+\mathscr{Y}(\lambda,x))=0.
\end{equation}
Thus, thanks to \eqref{iv.2}, differentiating \eqref{iv.4}  with respect to $x$ yields to
$$
   Q \mathfrak{L}(\lambda)(x +D_x \mathscr{Y}(\lambda,0)x)=0\quad \hbox{for all}\;\;
   x\in N[\mathfrak{L}(\lambda_0)].
$$
Equivalently,
$$
  Q\mathfrak{L}(\lambda)(I_U-P) D_x \mathscr{Y}(\lambda,0)x = -Q \mathfrak{L}(\lambda) P x
  \quad \hbox{for all}\;\;    x\in N[\mathfrak{L}(\lambda_0)].
$$
Consequently, for sufficiently small $\tilde \Omega$, we find that,
for every $\lambda\in \tilde \Omega$,
\begin{equation}
\label{iv.5}
  D_x \mathscr{Y}(\lambda,0) = -\left[  Q\mathfrak{L}(\lambda)(I_U-P)\right]^{-1} Q \mathfrak{L}(\lambda) P=-L_{11}^{-1}(\lambda) L_{12}(\lambda).
\end{equation}
Consequently, substituting $y$ by $\mathscr{Y}(\lambda,x)$  into the
second equation of \eqref{iv.3}, it turns out that solving \eqref{iv.1} locally at $(\lambda,u)=(\lambda_0,0)$ is equivalent to solve the  equation
\begin{equation}
\label{iv.6}
  (I_V-Q) \mathfrak{L}(\lambda)(x+\mathscr{Y}(\lambda,x))+(I_V-Q) \mathfrak{N}(\lambda,x+\mathscr{Y}(\lambda,x))= 0
\end{equation}
in a neighborhood of $(\lambda_0,0)$ in $\mathbb{K}\times N[\mathfrak{L}(\lambda_0)]$. Equation
\eqref{iv.6} is often referred to as the \emph{bifurcation equation} of \eqref{iv.1} at $(\lambda_0,0)$.
According to \cite[Le. 3.1.1]{LG01}, $(\lambda,u)=(\lambda_0,0)$ is a bifurcation point of
\eqref{iv.1} from $(\lambda,u)=(\lambda,0)$ if and only if $(\lambda,x)=(\lambda_0,0)$ is a bifurcation point of \eqref{iv.5} from $(\lambda,x)=(\lambda,0)$. As the linearization of the left hand side
of \eqref{iv.6} at $(\lambda,x)=(\lambda,0)$ is given by the linear operator
\begin{equation*}
  \mathscr{B}(\lambda)x:=(I_V-Q) \mathfrak{L}(\lambda)\left( x+ D_x \mathscr{Y}(\lambda,0)x\right)\quad
  \hbox{for all}\;\; x \in N[\mathfrak{L}(\lambda)],
\end{equation*}
and, due to \eqref{iv.5}, $\mathscr{B}(\lambda)$ can be expressed as
\begin{equation*}
  \mathscr{B}(\lambda):=(I_V-Q) \mathfrak{L}(\lambda)\left[ P - (I_U-P) L_{11}^{-1}(\lambda)L_{12}(\lambda)\right],
\end{equation*}
by the definitions of the $L_{ij}(\lambda)$'s, it becomes apparent that
\begin{equation}
\label{iv.7}
  \mathscr{B}(\lambda):=L_{22}(\lambda) - L_{21}(\lambda)
  L_{11}^{-1}(\lambda)L_{12}(\lambda)=
  \mathscr{S}_{\mathfrak{L}(\lambda),(P,Q)}(\mathfrak{L}(\lambda)).
\end{equation}
Therefore, the first part of Theorem \ref{th1.2} and, in particular, \eqref{1.8} is proven. Naturally,
\eqref{1.9} follows readily by combining \eqref{1.8} with Theorem \ref{th1.1}. The equivalence between (b), (c), (d), and (e) is a direct consequence from \eqref{1.7} and \eqref{1.9}, and the equivalence of (a) and (b) is \cite[Th. 4.3.4]{LG01}.
\par
Finally, suppose that $\chi[\mathfrak{L},\lambda_0]$ is an odd integer. Then, due to \eqref{1.9},
$\det \mathscr{B}(\lambda)$ changes sign as $\lambda$ crosses $\lambda_0$. Hence, also the Brouwer degree of $(\lambda,0)$ changes as $\lambda$ crosses $\lambda_0$. Thus, thanks to  \cite[Th. 6.2.1]{LG01}, the bifurcation equation \eqref{iv.6} possesses a (local) continuum of non-trivial solutions emanating from $(\lambda,x)=(\lambda,0)$ at $\lambda_0$. Naturally, by the local structure of the solutions
of \eqref{iv.1} in a neighborhood of $(\lambda,u)=(\lambda_0,0)$, it becomes apparent that
\eqref{iv.1} also has a (local) continuum of non-trivial solutions emanating from $(\lambda,u)=(\lambda,0)$ at $\lambda=\lambda_0$. Consequently,  Zorn's lemma shows the existence of the component $\mathscr{C}$. This ends the proof of Theorem \ref{th1.2}.

\section{The classical multiplicity from a geometrical perspective}

\noindent The main goal of this section is establishing a, rather hidden, bi-association between
the classical algebraic multiplicity and the concept of intersection index of varieties in
Algebraic Geometry. Naturally, this might open new challenging perspectives in both fields,
besides incrementing their number of potential applications.
The next, and final, section will extend the findings of this one to cover the generalized algebraic multiplicity $\chi$.  As for algebraic purposes one needs to work in algebraically closed fields,  throughout the rest of this paper we will assume that  $\mathbb{K}=\mathbb{C}$.
\par
Subsequently, we identify $\mathcal{L}(\mathbb{C}^{N})\simeq\mathbb{C}^{N^{2}}$ and regard the determinant map,
$$
   \mathcal{D}: \mathcal{L}(\mathbb{C}^{N})\to\mathbb{C}, \qquad \mathcal{D}(T):=\det T \;\;
   \hbox{for every}\;\;  T\in \mathcal{L}(\mathbb{C}^{N}),
$$
as an homogeneous  polynomial map of degree $N$. Using this identification, every $T\in \mathcal{L}(\mathbb{C}^{N})$ can be expressed as en element of
$\mathbb{C}^{N^2}$ through the identification
\begin{equation*}
T = \left(\begin{array}{cccc}
x_{1} & x_{2} & \cdots & x_{N} \\
x_{N+1} & x_{N+2} & \cdots & x_{2N} \\
\vdots & \vdots & \ddots & \vdots \\
x_{N^{2}-N+1} & x_{N^{2}-N+2} & \cdots & x_{N^{2}}
\end{array}\right)=\left(x_1,x_2,\ldots,x_{N^2-1},x_{N^2}\right).
\end{equation*}
Then, for every $1\leq k\leq N^{2}$, we define the $k$\textit{-th differential functional} of the determinant map at a given operator $L\in\mathcal{L}(\mathbb{C}^{N})$ as the map
$\mathcal{D}^{k}(L):  \mathcal{L}(\mathbb{C}^{N})  \to  \mathbb{C}$ defined by
\begin{equation*}
\mathcal{D}^{k}(L)[T]:=\sum_{1\leq j_{1},j_{2},\cdots,j_{k}\leq N^{2}}\left( \frac{\partial^{k}\mathcal{D}}{\partial x_{j_{1}}\partial x_{j_{2}}\cdots\partial x_{j_{k}}}\right) (L) \cdot  x_{j_{1}}x_{j_{2}}\cdots x_{j_{k}}
\end{equation*}
for all $T\in \mathcal{L}(\mathbb{C}^{N})$.  Note that, for any given $L\in\mathcal{L}(\mathbb{C}^{N})$, the Taylor expansion of the determinant map, $\mathcal{D}$, centered at $L$, is given through
$$
  \mathcal{D}(T)=\mathcal{D}(L)+\sum_{k=1}^{N^2} \frac{1}{k!}\mathcal{D}^{k}(L)[T-L]\qquad \hbox{for all}\;\; T\in  \mathcal{L}(\mathbb{C}^{N}).
$$
In particular, $\mathcal{D}^{k}(L)$ can be viewed as an homogeneous polynomial of degree $k$. The associated algebraic variety
$$
  V(\mathcal{D}^{k}(L))= (\mathcal{D}^{k}(L))^{-1}(0)
$$
will be subsequently refereed to as the $k$\emph{-th tangent variety} of the determinant map at $L\in\mathcal{L}(\mathbb{C}^{N})$, and will be denoted by $\mathscr{T}_{L}^{k}\mathcal{D}$. As $\mathscr{T}^{1}_{L}\mathcal{D}$ is the tangent space to  $D^{-1}(0)$ at $L$, the variety $\mathscr{T}^{k}_{L}\mathcal{D}$ can be viewed as a generalization of the classical tangent space at a given point.   The following technical result can be easily obtained from the chain rule.

\begin{lemma}
\label{le5.1}
Let $T\in\mathcal{L}(\mathbb{C}^{N})$ and $\mathfrak{L}(\lambda)=\lambda I_{N}-T$ for all $\lambda\in \mathbb{C}$. Then,  for every $r\in\mathbb{N}$, the following identity holds
\begin{equation}
\label{5.1}
	\frac{d^r}{d\lambda^r}( \mathcal{D}\mathfrak{L}(\lambda))\equiv (\mathcal{D}\mathfrak{L})^{(r)}(\lambda)= \mathcal{D}^{r}(\mathfrak{L}(\lambda))[I_{N}].
\end{equation}
\end{lemma}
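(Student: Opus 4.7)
The plan is to compare Taylor expansions at each point $\lambda_{0}\in\mathbb{C}$, exploiting two elementary but decisive structural facts. First, $\mathcal{D}$ is a polynomial of degree $N$ in the $N^{2}$ coordinates of $\mathcal{L}(\mathbb{C}^{N})$, so its Taylor formula at any base point is the finite sum already recalled in the excerpt immediately preceding the lemma. Second, the path $\mathfrak{L}(\lambda)=\lambda I_{N}-T$ is affine in $\lambda$ with constant first derivative $\mathfrak{L}'(\lambda)=I_{N}$ and with $\mathfrak{L}^{(k)}(\lambda)=0$ for every $k\geq 2$. These two ingredients reduce \eqref{5.1} to an exercise in bookkeeping.

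First I would fix $\lambda_{0}\in\mathbb{C}$, set $L:=\mathfrak{L}(\lambda_{0})$, and note that $\mathfrak{L}(\lambda)-L=(\lambda-\lambda_{0})I_{N}$. Substituting this into the Taylor expansion of $\mathcal{D}$ centered at $L$ yields
$$\mathcal{D}(\mathfrak{L}(\lambda)) \;=\; \mathcal{D}(L)+\sum_{k=1}^{N^{2}}\frac{1}{k!}\,\mathcal{D}^{k}(L)\bigl[(\lambda-\lambda_{0})I_{N}\bigr].$$
The key observation is that, by construction, $\mathcal{D}^{k}(L)[\cdot]$ is homogeneous of degree $k$ in its argument: scaling the input $T\mapsto \alpha T$ scales every monomial $x_{j_{1}}\cdots x_{j_{k}}$ in the defining sum by $\alpha^{k}$. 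Consequently the scalar $(\lambda-\lambda_{0})$ factors out with exponent $k$, and we obtain
$$\mathcal{D}(\mathfrak{L}(\lambda)) \;=\; \sum_{k=0}^{N^{2}}\frac{(\lambda-\lambda_{0})^{k}}{k!}\,\mathcal{D}^{k}(L)[I_{N}],$$
with the convention $\mathcal{D}^{0}(L)[I_{N}]:=\mathcal{D}(L)$. Since the left-hand side is a polynomial of $\lambda$ (in particular, analytic at $\lambda_{0}$) whose Taylor coefficients are unique, matching coefficients gives $(\mathcal{D}\mathfrak{L})^{(r)}(\lambda_{0})=\mathcal{D}^{r}(\mathfrak{L}(\lambda_{0}))[I_{N}]$. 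As $\lambda_{0}$ is arbitrary, \eqref{5.1} follows; for $r>N$ both sides vanish trivially, since $\mathcal{D}\circ\mathfrak{L}$ is a polynomial in $\lambda$ of degree at most $N$ and all $r$-th partial derivatives of $\mathcal{D}$ are identically zero in that range.

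A purely differential alternative is Faà di Bruno applied in the scalar variable $\lambda$: because $\mathfrak{L}^{(k)}\equiv 0$ for $k\geq 2$, the only partition of $r$ contributing to the $r$-th derivative of $\mathcal{D}\circ\mathfrak{L}$ is $(1,1,\ldots,1)$, and the surviving term is precisely the $r$-th directional derivative of $\mathcal{D}$ at $\mathfrak{L}(\lambda)$ along the constant direction $I_{N}$, which coincides with $\mathcal{D}^{r}(\mathfrak{L}(\lambda))[I_{N}]$ by the standard correspondence between iterated directional derivatives and the diagonal values of symmetric multilinear forms. The only genuine obstacle is notational: one must verify that the multi-index definition of $\mathcal{D}^{k}(L)[T]$ in the excerpt agrees both with the $k$-th directional derivative along the diagonal and with the degree-$k$ homogeneous component of the Taylor expansion, and this is a direct consequence of Schwarz's theorem together with the multinomial expansion applied to the polynomial $\mathcal{D}$.
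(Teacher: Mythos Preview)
Your argument is correct and, in fact, cleaner than the paper's. The paper proves \eqref{5.1} by a direct induction on $r$: it unwinds $\mathcal{D}^{r}(\mathfrak{L}(\lambda))[I_{N}]$ as the sum of $r$-fold diagonal partial derivatives $\sum_{i_{1},\dots,i_{r}}\partial^{r}\mathcal{D}/\partial x_{i_{1},i_{1}}\cdots\partial x_{i_{r},i_{r}}$ evaluated at $\mathfrak{L}(\lambda)$, verifies the base case $r=1$ via the chain rule and $\mathfrak{L}'(\lambda)=I_{N}$, and then differentiates the inductive hypothesis once more in $\lambda$. Your route bypasses the induction entirely by reading off Taylor coefficients: you substitute the affine displacement $\mathfrak{L}(\lambda)-\mathfrak{L}(\lambda_{0})=(\lambda-\lambda_{0})I_{N}$ into the finite Taylor expansion of $\mathcal{D}$ (which the paper has already recorded), invoke the degree-$k$ homogeneity of $\mathcal{D}^{k}(L)[\,\cdot\,]$ to extract the scalar $(\lambda-\lambda_{0})^{k}$, and match coefficients. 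What your approach buys is that the special structure $\mathfrak{L}^{(k)}\equiv 0$ for $k\geq 2$ is used once, globally, rather than at every step of an induction; the Fa\`a di Bruno remark makes this transparent. What the paper's approach buys is that it never needs to appeal to the Taylor identity for $\mathcal{D}$ or to uniqueness of power-series coefficients, working instead purely at the level of partial derivatives. Both are complete.
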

\begin{proof}
For notational convenience, we will identify $x_{i,j}$ with $x_{(i-1)N+j}$ for any $N\times N$ matrix $(x_{i,j})_{1\leq i,j\leq N}$. By definition,
\begin{equation*}
	\mathcal{D}^{r}(\mathfrak{L}(\lambda))[I_{N}] =\sum_{1\leq i_{1},i_{2},\cdots,i_{r}\leq N}\left(\frac{\partial^{r}\mathcal{D}}{\partial x_{i_{1},i_{1}} \partial x_{i_{2},i_{2}} \cdots \partial x_{i_{r},i_{r}}}\right)(\mathfrak{L}(\lambda)).
\end{equation*}
Thus, \eqref{5.1} can be equivalently written in the form
\begin{equation}
\label{5.2}
	(\mathcal{D}\mathfrak{L})^{(r)}(\lambda)=\sum_{1\leq i_{1},i_{2},\cdots,i_{r}\leq N}\left(\frac{\partial^{r}\mathcal{D}}{\partial x_{i_{1},i_{1}} \partial x_{i_{2},i_{2}} \cdots \partial x_{i_{r},i_{r}}}\right)(\mathfrak{L}(\lambda)).
\end{equation}
The proof of \eqref{5.2} will proceed by induction on $r$. Suppose $r=1$. Then, since $\mathfrak{L}'(\lambda)=I_{N}$, we have that
\begin{equation*}	 (\mathcal{D}\mathfrak{L})^{'}(\lambda)=\nabla\mathcal{D}(\mathfrak{L}(\lambda))\circ\mathfrak{L}'(\lambda)
=\sum_{i=1}^{N^{2}}\frac{\partial\mathcal{D}}{\partial x_{i}}(\mathfrak{L}(\lambda))\cdot [\mathfrak{L}'(\lambda)]_{i}=\sum_{i_{1}=1}^{N}\frac{\partial\mathcal{D}}{\partial x_{i_{1},i_{1}}}(\mathfrak{L}(\lambda)),
\end{equation*}
where $[\mathfrak{L}'(\lambda)]_{i}$ is the $i$-th term of the matrix $\mathfrak{L}'(\lambda)$. Thus,
\eqref{5.2} holds for $r=1$. Now, suppose that \eqref{5.2} is satisfied for some $r=n-1$ with $n\geq 2$, i.e.,
\begin{equation}
\label{5.3}
	(\mathcal{D}\mathfrak{L})^{(n-1)}(\lambda)=\sum_{1\leq i_{1},i_{2},\cdots,i_{n-1}\leq N}\left(\frac{\partial^{n-1}\mathcal{D}}{\partial x_{i_{1},i_{1}} \partial x_{i_{2},i_{2}} \cdots \partial x_{i_{n-1},i_{n-1}}}\right)(\mathfrak{L}(\lambda)).
\end{equation}
Then, differentiating with respect to $\lambda$ yields
\begin{align*}
	(\mathcal{D}\mathfrak{L})^{(n)}(\lambda)&=\frac{d}{d\lambda} \sum_{1\leq i_{1},i_{2},\cdots,i_{n-1}\leq N}\left(\frac{\partial^{n-1}\mathcal{D}}{\partial x_{i_{1},i_{1}} \partial x_{i_{2},i_{2}} \cdots \partial x_{i_{n-1},i_{n-1}}}\right)(\mathfrak{L}(\lambda)) \\
	&=\sum_{1\leq i_{1},i_{2},\cdots,i_{n-1}\leq N}\frac{d}{d\lambda} \left[\left(\frac{\partial^{n-1}\mathcal{D}}{\partial x_{i_{1},i_{1}} \partial x_{i_{2},i_{2}} \cdots \partial x_{i_{n-1},i_{n-1}}}\right)(\mathfrak{L}(\lambda))\right] \\
	&=\sum_{1\leq i_{1},i_{2},\cdots,i_{n-1}\leq N}\sum_{j=1}^{N^{2}}\left(\frac{\partial^{n}\mathcal{D}}{\partial x_{i_{1},i_{1}} \partial x_{i_{2},i_{2}} \cdots \partial x_{i_{n-1},i_{n-1}}\partial x_{j}}\right)(\mathfrak{L}(\lambda))\cdot [\mathfrak{L}'(\lambda)]_{j}\\
	&= \sum_{1\leq i_{1},i_{2},\cdots,i_{n}\leq N}\left(\frac{\partial^{n}\mathcal{D}}{\partial x_{i_{1},i_{1}} \partial x_{i_{2},i_{2}} \cdots \partial x_{i_{n},i_{n}}}\right)(\mathfrak{L}(\lambda)).
\end{align*}
This concludes the proof.
\end{proof}

The following result, illustrated by Figure \ref{F1}, gives a geometrical meaning to the classical algebraic multiplicity of an eigenvalue of a linear operator.  As  illustrated by Figure \ref{F1},
the algebraic multiplicity provides us with the minimal integer $\mathfrak{m}$  for which the straight line $\lambda I_N$ is not contained within the $\mathfrak{m}$-th tangent variety of the determinant map at $\mathfrak{L}(\lambda_{0})$, which has been denoted by $\mathscr{T}^{\mathfrak{m}}_{\mathfrak{L}(\lambda_{0})}\mathcal{D}$. Actually, Figure \ref{F1} illustrates an admissible case when $\lambda I_N$ lies entirely within the tangent variety  $\mathscr{T}^{1}_{\mathfrak{L}(\lambda_{0})}\mathcal{D}$ and hence,
$$
  \mathfrak{m}=\chi[\lambda I_N-T,\lambda_0] \geq 2.
$$

\begin{theorem}
\label{th5.2}
Let $T\in\mathcal{L}(\mathbb{C}^{N})$, pick $\lambda_{0}\in\sigma(T)$, and consider the straight line
\begin{equation*}
	\mathfrak{L}(\lambda)=\lambda I_{N}-T, \quad \lambda\in \mathbb{C}.
\end{equation*}
Then, $\mathfrak{m} := \chi[\mathfrak{L},\lambda_{0}]$ is the minimal integer $\mathfrak{m}\geq 1$ for which  the line $\lambda I_{N}$ is contained in $\mathscr{T}^{i}_{\mathfrak{L}(\lambda_{0})}\mathcal{D}$ for every $1\leq i < \mathfrak{m}$, but not for $i=\mathfrak{m}$.
\end{theorem}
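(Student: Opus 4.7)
The strategy is to translate the geometric condition that the line $\lambda I_{N}$ lies in the tangent variety $\mathscr{T}^{i}_{\mathfrak{L}(\lambda_{0})}\mathcal{D}$ into an analytic condition on the derivatives of $\det\mathfrak{L}(\lambda)$ at $\lambda_{0}$, and then to invoke formula \eqref{2.4} (namely $\chi[\mathfrak{L},\lambda_{0}]=\mathrm{ord}_{\lambda_{0}}\det\mathfrak{L}(\lambda)$) together with the classical characterization of the order of a zero of a polynomial.

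First, I would observe that by construction $\mathcal{D}^{k}(\mathfrak{L}(\lambda_{0}))[\,\cdot\,]$ is a homogeneous polynomial of degree $k$ on $\mathcal{L}(\mathbb{C}^{N})$, so that for every $\lambda\in\mathbb{C}$ one has
$$
\mathcal{D}^{k}(\mathfrak{L}(\lambda_{0}))[\lambda I_{N}]=\lambda^{k}\,\mathcal{D}^{k}(\mathfrak{L}(\lambda_{0}))[I_{N}].
$$
Consequently, the straight line $\{\lambda I_{N}:\lambda\in\mathbb{C}\}$ is contained in $\mathscr{T}^{k}_{\mathfrak{L}(\lambda_{0})}\mathcal{D}=(\mathcal{D}^{k}(\mathfrak{L}(\lambda_{0})))^{-1}(0)$ if, and only if, $\mathcal{D}^{k}(\mathfrak{L}(\lambda_{0}))[I_{N}]=0$. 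This is the key elementary step that reduces a condition over an infinite family of matrices to a single scalar equation.

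Next, I would invoke Lemma \ref{le5.1}, which identifies $\mathcal{D}^{k}(\mathfrak{L}(\lambda_{0}))[I_{N}]$ with the $k$-th derivative $(\mathcal{D}\mathfrak{L})^{(k)}(\lambda_{0})$ of the scalar polynomial $\det\mathfrak{L}(\lambda)$. Combining both reductions gives the equivalence
$$
\{\lambda I_{N}\}\subset \mathscr{T}^{k}_{\mathfrak{L}(\lambda_{0})}\mathcal{D}\quad \Longleftrightarrow\quad (\det\mathfrak{L})^{(k)}(\lambda_{0})=0,
$$
valid for every $k\geq 1$.

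Finally, I would close the argument by applying \eqref{2.4}, which yields $\mathfrak{m}=\chi[\mathfrak{L},\lambda_{0}]=\mathrm{ord}_{\lambda_{0}}\det\mathfrak{L}(\lambda)$. Since $\det\mathfrak{L}(\lambda)$ is the monic polynomial of degree $N$ in $\lambda$ and $\lambda_{0}\in\sigma(T)$ guarantees that it vanishes at $\lambda_{0}$, the order of its zero at $\lambda_{0}$ is the unique positive integer for which $(\det\mathfrak{L})^{(i)}(\lambda_{0})=0$ for all $1\leq i<\mathfrak{m}$ and $(\det\mathfrak{L})^{(\mathfrak{m})}(\lambda_{0})\neq 0$. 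In view of the equivalence displayed above, this is precisely the geometric statement of the theorem. I do not anticipate any real obstacle in the execution: once Lemma \ref{le5.1} and \eqref{2.4} are in hand, the proof is a short chain of routine identifications, and the only subtlety is the use of the homogeneity of $\mathcal{D}^{k}(L)$ to collapse the line condition to a single scalar equation.
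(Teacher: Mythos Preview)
Your proposal is correct and follows essentially the same argument as the paper: both reduce the containment of $\lambda I_N$ in $\mathscr{T}^{k}_{\mathfrak{L}(\lambda_0)}\mathcal{D}$ to the single condition $\mathcal{D}^{k}(\mathfrak{L}(\lambda_0))[I_N]=0$ via homogeneity, apply Lemma~\ref{le5.1} to identify this with $(\det\mathfrak{L})^{(k)}(\lambda_0)$, and finish with \eqref{2.4}. If anything, your use of degree-$k$ homogeneity is slightly more precise than the paper's phrasing (which calls $\mathcal{D}^{i}(\mathfrak{L}(\lambda_0))$ ``linear''), but the logic is identical.
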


\begin{proof}
By \eqref{5.3},  $\mathfrak{m}=\chi[\mathfrak{L},\lambda_{0}]$ if and only if
$$
   (\mathcal{D}\mathfrak{L})^{(i)}(\lambda_{0})=0 \;\;  \text{ for each } \;\; i\in\{1,...,\mathfrak{m}-1\} \;\; \hbox{and}\;\; (\mathcal{D}\mathfrak{L})^{(\mathfrak{m})}(\lambda_{0})\neq 0.
$$
Therefore, owing to \eqref{5.1}, it becomes apparent that
$$
  \mathcal{D}^{i}(\mathfrak{L}(\lambda_{0}))[I_{N}]=0\;\;  \text{ for each }\;\;  i\in\{1,...,\mathfrak{m}-1\}\;\; \hbox{and}\;\; \mathcal{D}^{\mathfrak{m}}(\mathfrak{L}(\lambda_{0}))[I_{N}]\neq 0.
$$
We conclude the proof by noticing that the line $\lambda I_{N}$ is contained in the variety
$$
  \mathscr{T}^{i}_{\mathfrak{L}(\lambda_{0})}\mathcal{D}=\left(\mathcal{D}^{i}(\mathfrak{L}(\lambda_{0}))
  \right)^{-1}(0)
$$
if and only if
\begin{equation*}
\mathcal{D}^{i}(\mathfrak{L}(\lambda_{0}))[\lambda I_{N}]=0 \text{ for all } \lambda\in\mathbb{C}.
\end{equation*}
As the map $\mathcal{D}^{i}(\mathfrak{L}(\lambda_{0}))$ is linear, this occurs if and only if
\begin{equation*}
  \mathcal{D}^{i}(\mathfrak{L}(\lambda_{0}))[I_{N}]=0.
\end{equation*}
The proof is complete.
\end{proof}

\begin{center}

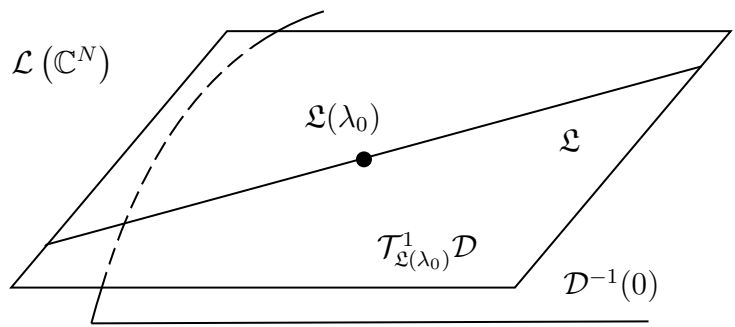
\begin{figure}[ht]

\tikzset{every picture/.style={line width=0.75pt}} 

\begin{tikzpicture}[x=0.75pt,y=0.75pt,yscale=-1,xscale=1]

\draw    (145.5,185) .. controls (158.5,137) and (186.5,52) .. (261.5,28) ;

\draw    (145.5,185) -- (423.5,184) ;

\draw   (213.2,38) -- (464.5,38) -- (356.8,167) -- (105.5,167) -- cycle ;
\draw  [fill={rgb, 255:red, 0; green, 0; blue, 0 }  ,fill opacity=1 ] (278,102.5) .. controls (278,100.57) and (279.57,99) .. (281.5,99) .. controls (283.43,99) and (285,100.57) .. (285,102.5) .. controls (285,104.43) and (283.43,106) .. (281.5,106) .. controls (279.57,106) and (278,104.43) .. (278,102.5) -- cycle ;
\draw    (121.9,145.73) -- (449.9,55.73) ;

\draw  [color={rgb, 255:red, 255; green, 255; blue, 255 }  ,draw opacity=1 ][line width=3] [line join = round][line cap = round] (217.24,46.47) .. controls (219.98,46.47) and (228.24,50.51) .. (228.24,52.47) ;
\draw  [color={rgb, 255:red, 255; green, 255; blue, 255 }  ,draw opacity=1 ][line width=3] [line join = round][line cap = round] (207.24,55.47) .. controls (212.91,55.47) and (214.37,61.47) .. (217.24,61.47) ;
\draw  [color={rgb, 255:red, 255; green, 255; blue, 255 }  ,draw opacity=1 ][line width=3] [line join = round][line cap = round] (198.24,65.47) .. controls (200,65.47) and (206.24,70.98) .. (206.24,72.47) ;
\draw  [color={rgb, 255:red, 255; green, 255; blue, 255 }  ,draw opacity=1 ][line width=3] [line join = round][line cap = round] (187.24,76.47) .. controls (189.36,76.47) and (197.24,82.04) .. (197.24,83.47) ;
\draw  [color={rgb, 255:red, 255; green, 255; blue, 255 }  ,draw opacity=1 ][line width=3] [line join = round][line cap = round] (176.24,88.47) .. controls (178.89,88.47) and (187.24,93.95) .. (187.24,96.47) ;
\draw  [color={rgb, 255:red, 255; green, 255; blue, 255 }  ,draw opacity=1 ][line width=3] [line join = round][line cap = round] (170.24,101.47) .. controls (171.9,101.47) and (182.24,107.81) .. (182.24,108.47) ;
\draw  [color={rgb, 255:red, 255; green, 255; blue, 255 }  ,draw opacity=1 ][line width=3] [line join = round][line cap = round] (163.24,113.47) .. controls (165.27,115.5) and (177.24,119.09) .. (177.24,121.47) ;
\draw  [color={rgb, 255:red, 255; green, 255; blue, 255 }  ,draw opacity=1 ][line width=3] [line join = round][line cap = round] (157.24,125.47) .. controls (161.63,125.47) and (163.58,129.47) .. (166.24,129.47) ;
\draw  [color={rgb, 255:red, 255; green, 255; blue, 255 }  ,draw opacity=1 ][line width=3] [line join = round][line cap = round] (153.24,143.47) .. controls (157.65,143.47) and (159.29,147.47) .. (163.24,147.47) ;
\draw  [color={rgb, 255:red, 255; green, 255; blue, 255 }  ,draw opacity=1 ][line width=3] [line join = round][line cap = round] (148.24,153.47) .. controls (150.53,155.76) and (160.24,160.49) .. (160.24,162.47) ;
\draw  [color={rgb, 255:red, 255; green, 255; blue, 255 }  ,draw opacity=1 ][line width=3] [line join = round][line cap = round] (120.83,143.33) .. controls (120.5,143.33) and (120.17,143.33) .. (119.83,143.33) ;
\draw  [color={rgb, 255:red, 255; green, 255; blue, 255 }  ,draw opacity=1 ][line width=3] [line join = round][line cap = round] (120.83,144.33) .. controls (120.5,144.33) and (120.17,144.33) .. (119.83,144.33) ;

\draw (405,166) node   {$\mathcal{D}^{-1}(0)$};
\draw (271,82) node   {$\mathfrak{L}( \lambda _{0})$};
\draw (313,149) node   {$\mathcal{T}^{1}_{\mathfrak{L}( \lambda _{0})}\mathcal{D}$};
\draw (384,93) node   {$\mathfrak{L}$};
\draw (131,55) node   {$\mathcal{L}\left(\mathbb{C}^{N}\right)$};

\end{tikzpicture}

\caption{Geometrical Meaning of Classical Algebraic Multiplicity}
\label{F1}
\end{figure}
\end{center}

Theorem \ref{th5.2} leads us to think of the existence of a sharp connection between Spectral Theory and Algebraic Geometry, which had been already stated in Theorem \ref{th1.3}. The rest of this section is devoted to deliver a proof of this theorem.
\par
Let $T\in\mathcal{L}(\mathbb{C}^{N})$, pick $\lambda_{0}\in\sigma(T)$, and consider the straight line
\begin{equation*}
	\mathfrak{L}(\lambda)=\lambda I_{N}-T, \quad \lambda\in \mathbb{C}.
\end{equation*}
As usual in this section, we express the operator $T\in\mathcal{L}(\mathbb{C}^{N})$ in matrix form
\begin{equation*}
	T\sim\left(\begin{array}{cccc}
	t_{1} & t_{2} & \cdots & t_{N} \\
	t_{N+1} & t_{N+2} & \cdots & t_{2N} \\
	\vdots & \vdots & \ddots & \vdots \\
	t_{N^{2}-N+1} & t_{N^{2}-N+2} & \cdots & t_{N^{2}}
	\end{array}\right),
\end{equation*}
i.e., setting $T=\left(t_{i,j}\right)_{1\leq i, j\leq N}$, we are identifying $t_{i,j}$ with $t_{(i-1)N+j}$.
\par
Naturally, the parametric equations of the line $\mathfrak{L}(\mathbb{C})$ in $\mathbb{C}^{N^{2}}$ are the following ones:
\begin{equation*}
	\left\{
	\begin{array}{ll}
	x_{i}=\lambda-t_{i} & \text{ for } i\in\{(m-1)N+m: m\in\{1,2,...,N\}\},\\
	x_{i}=-t_{i} & \text{ for } i\in\{1,2,...,N^{2}\}\backslash\{(m-1)N+m: m\in\{1,2,...,N\}\}.
	\end{array}\right.
\end{equation*}
Thus, by a simple computation involving the following identity
\begin{equation}
\label{5.4}
  \lambda=x_{1}+t_{1},
\end{equation}
it is easily seen that $ \mathfrak{L}(\mathbb{C})$ is the algebraic variety generated by the polynomials
\begin{equation*}
	\left\{
	\begin{array}{ll}
	\mathfrak{P}_{i-1}\equiv x_{i}-x_{1}+t_{i}-t_{1} & \text{ for } i\in\{(m-1)N+m: m\in\{2,...,N\}\},\\
	\mathfrak{P}_{i-1}\equiv x_{i}+t_{i} & \text{ for } i\in\{1,2,...,N^{2}\}\backslash\{(m-1)N+m: m\in\{1,2,...,N\}\}. \end{array} 	\right.
\end{equation*}
In other words,
$$
  \mathfrak{L}(\mathbb{C}) = V\left(\mathfrak{P}_{1},\ldots,\mathfrak{P}_{N^2-1}\right),
$$
i.e., the implicit equations of  $ \mathfrak{L}(\mathbb{C})$ are
\begin{equation*}
	\left\{
	\begin{array}{ll}
	x_{i}-x_{1}+t_{i}-t_{1}=0 & \text{ for } i\in\{(m-1)N+m: m\in\{2,...,N\}\},\\
	x_{i}+t_{i} =0 & \text{ for } i\in\{1,2,...,N^{2}\}\backslash\{(m-1)N+m: m\in\{1,2,...,N\}\}. \end{array} 	 \right.
\end{equation*}
According to Theorem 18.18 of Eisenbud \cite{E0}, the determinantal variety $\mathcal{D}^{-1}(0)$ is Cohen-Macaulay, as well as $\mathfrak{L}(\mathbb{C})$, because it is smooth
(see \cite[Sect. 18.2]{E0}). Therefore, by Eisenbud and Harris \cite[p. 48]{E},
\begin{equation*}
i(\mathcal{D}^{-1}(0),\mathfrak{L}(\mathbb{C});\mathfrak{L}(\lambda_{0}))=
\ell_{\mathcal{O}}(\mathcal{O}/(\mathfrak{D},\mathfrak{P}))
\end{equation*}
(see \eqref{1.11}), where $\mathcal{O}\equiv \mathcal{O}_{\mathfrak{L}(\lambda_{0}),\mathbb{C}^{N^{2}}}$ stands for  the local ring of $\mathbb{C}^{N^{2}}$ at $\mathfrak{L}(\lambda_0)$, $\mathfrak{D}$ is the ideal generated by the determinant map,
$$
  \mathfrak{D}:=(\mathcal{D}(x_{1},x_{2},\cdots,x_{N^{2}})),
$$
$\mathfrak{P}$ stands for the ideal generated by the polynomials $\mathfrak{P}_{j}$, $1\leq j\leq N^2-1$,
and $\ell_\mathcal{O}$ is the length of the module $\mathcal{O}/(\mathfrak{D},\mathfrak{P})$ over the ring $\mathcal{O}$. By \eqref{5.4},
\begin{align*}
(\mathfrak{D},\mathfrak{P}) & = (\mathcal{D}(x_{1},x_{2},\cdots,x_{N^{2}}),\mathfrak{P}_{1},\mathfrak{P}_{2},\cdots,\mathfrak{P}_{N^{2}-1})
\\ & =(\mathcal{D}[(x_{1}+t_{1})I_N-T],\mathfrak{P}_{1},\mathfrak{P}_{2},\cdots,\mathfrak{P}_{N^{2}-1})\\ &=(\mathcal{D}\mathfrak{L}(x_{1}+t_{1}),\mathfrak{P}_{1},\mathfrak{P}_{2},\cdots,\mathfrak{P}_{N^{2}-1}).
\end{align*}
On the other hand, by the definition of the classical algebraic geometry, $\mathfrak{m}=\chi[\lambda I_N-T,\lambda_{0}]$,  the following identity holds
\begin{equation*}
	\mathcal{D}\mathfrak{L}(x_{1}+t_{1})=(x_{1}+t_{1}-\lambda_{0})^{\mathfrak{m}}f(x_{1}+t_{1})
\end{equation*}
for some polynomial $f$ such that $f(\lambda_{0})\neq 0$. Thus, since  $f(x_{1}+t_{1})$ is a unit in the local ring $\mathcal{O}_{\mathfrak{L}(\lambda_{0}),\mathbb{C}^{N^{2}}}$, we find that
\begin{equation*}
(\mathfrak{D},\mathfrak{P}) = ((x_{1}+t_{1}-\lambda_{0})^{\mathfrak{m}},\mathfrak{P}_{1},\mathfrak{P}_{2},\cdots,\mathfrak{P}_{N^{2}-1}).
\end{equation*}
On the other hand,   by a routine calculation,
\begin{equation*}
   V((x_{1}+t_{1}-\lambda_{0})^{\mathfrak{m}},\mathfrak{P}_{1},
   \mathfrak{P}_{2},\cdots,\mathfrak{P}_{N^{2}-1})=\left\{ \mathfrak{L}(\lambda_{0})\right\}.
\end{equation*}
Therefore, by, e.g.,  Proposition 6 of Fulton \cite{F1}, we get the isomorphism
\begin{equation*}
	\frac{\mathcal{O}_{\mathfrak{L}(\lambda_{0}), \mathbb{C}^{N^{2}}}}{(\mathfrak{D},\mathfrak{P})}\simeq \frac{\mathbb{C}[x_{1},x_{2},...,x_{N^{2}}]}{(\mathfrak{D},\mathfrak{P})}.
\end{equation*}
Moreover, since the residue field of $\mathcal{O}$ is $\mathbb{C}$ and $\mathfrak{m}\cdot\mathcal{O}/(\mathfrak{D},\mathfrak{P})=0$, where $\mathfrak{m}$ is the maximal ideal of $\mathcal{O}$, necessarily, by  Proposition 5.9 of Bump \cite{Bu},
\begin{equation*}
\ell_{\mathcal{O}}(\mathcal{O}/(\mathfrak{D},\mathfrak{P}))=\dim_{\mathbb{C}}(\mathcal{O}/(\mathfrak{D},\mathfrak{P})).
\end{equation*}
Since in $\mathbb{C}[x_{1},x_{2},...,x_{N^{2}}]/(\mathfrak{D},\mathfrak{P})$ the following
$N^2$ identities hold true
\begin{equation*}
	\left\{\begin{array}{ll}
	(x_{1}+t_{1}-\lambda_0)^{\mathfrak{m}}=0 & \\
	 x_{i}=x_{1}+t_{1}-t_{i} & \text{ for } i\in\{(m-1)n+m: m\in\{2,...,n\}\},\\
	 x_{i}=-t_{i} & \text{ for } i\in\{1,2,...,n^{2}\}\backslash\{(m-1)n+m: m\in\{1,2,...,n\}\},
	\end{array}\right.
\end{equation*}
it is apparent, from the Newton binomial,  that $x_1^\mathfrak{m}$ depends linearly on $x_1^j$, $0\leq j\leq \mathfrak{m}-1$, as well as the $x_i$'s for all $i\in \{2,...,N^2\}$. Therefore, 	 	
\begin{equation*}
	\frac{\mathbb{C}[x_{1},x_{2},...,x_{N^{2}}]}{(\mathfrak{D},\mathfrak{P})}=\langle 1, x_{1}, x^{2}_{1}, \cdots, x^{\mathfrak{m}-1}_{1} \rangle_{\mathbb{C}}
\end{equation*}
and consequently, 	
\begin{align*}
  i(\mathcal{D}^{-1}(0),\mathfrak{L}(\mathbb{R});\mathfrak{L}(\lambda_{0}))& =\ell_{\mathcal{O}}(\mathcal{O}/(\mathfrak{D},\mathfrak{P}))=\dim_{\mathbb{C}}(\mathcal{O}/(\mathfrak{D},\mathfrak{P}))\\
  &=\dim_{\mathbb{C}}
  \left(\mathbb{C}[x_{1},x_{2},...,x_{N^{2}}]/(\mathfrak{D},\mathfrak{P})\right)=\mathfrak{m}.
\end{align*}
This ends the proof of Theorem \ref{th1.3}.

\section{Proof of Theorem \ref{th1.4}}

\noindent Thanks to  the existence of the local Smith  form for general analytic curves, $\mathfrak{L}$,
at  isolated singular values, $\lambda_0$, the algebraic multiplicity of these curves, $\chi[\mathfrak{L},\lambda_0]$, can be also expressed as
the intersection index of two algebraic varieties, as already stated by Theorem \ref{th1.4}.
This section consists of the proof of that theorem.
\par
Let $\mathfrak{L}\in \mathcal{A}_{\lambda_{0}}(\Omega_{\lambda_{0}},\Phi_{0}(X,Y))$ be, and fix a pair $(P,Q)$ of $\mathfrak{L}(\lambda_{0})$-projections. Then, thanks to Theorem \ref{th3.3}, we have that
\begin{align*}
	\chi[\mathfrak{L},\lambda_{0}]& =\ord_{\lambda_{0}}\mathcal{D}_{\mathfrak{L}(\lambda_{0}),(P,Q)}(\mathfrak{L}(\lambda))=
\ord_{\lambda_{0}}\mathcal{D}\mathscr{S}_{\mathfrak{L}(\lambda_{0}),(P,Q)}
(\mathfrak{L}(\lambda))\\ 	 &=\ord_{\lambda_{0}}\mathcal{D}[L_{22}(\lambda)-L_{21}(\lambda)L_{11}^{-1}(\lambda) L_{12}(\lambda)].
\end{align*}
For the sake of notation, let us identify
$$
   \mathcal{L}(N[\mathfrak{L}(\lambda_0)],R[\mathfrak{L}(\lambda_0)]^{\perp})
   \simeq\mathcal{L}(\mathbb{C}^{N})\quad
   \hbox{with}\quad N:=\dim N[\mathfrak{L}(\lambda_{0})].
$$
According to \eqref{3.7}, we already know that
$$
 \lambda_{0}\in\Alg_{\kappa}(\mathscr{S}_{\mathfrak{L}(\lambda_{0}),(P,Q)}\circ\mathfrak{L})
$$
for some $1\leq \kappa\leq r$, where $\mathcal{C}^r$ is the class of regularity
of $\mathfrak{L}(\lambda)$. Thus,  by Theorem 7.8.1 of L\'{o}pez-G\'{o}mez and Mora-Corral \cite{LGMC}, $\mathscr{S}_{\mathfrak{L}(\lambda_{0}),\mathcal{P}}\circ\mathfrak{L}$ admits a local Smith form at $\lambda_0$, i.e.,  there exist a neighborhood, $\Omega_{\lambda_{0}}$,  of $\lambda_0$ such that $\Omega_{\lambda_{0}}\subset \Omega$, two invertible curves $\mathfrak{E}, \mathfrak{F} \in\mathcal{C}(\Omega_{\lambda_{0}},GL(\mathbb{C}^{N}))$, and $N$ positive integers
$$
  \kappa_1\geq \cdots \geq \kappa_N \geq 1
$$
such that
\begin{equation}
\label{6.1}
\mathscr{S}_{\mathfrak{L}(\lambda_{0}),(P,Q)}(\mathfrak{L}(\lambda))=\mathfrak{E}(\lambda)\cdot \mathfrak{P}(\lambda) \cdot\mathfrak{F}(\lambda) \quad\hbox{for all}\;\;  \lambda\in\Omega_{\lambda_{0}},
\end{equation}
where $\mathfrak{P}(\lambda)$ is the matrix of order $n$ defined by
\begin{equation}
\label{6.2}
\mathfrak{P}(\lambda):= \mathrm{diag\,}\left( (\lambda-\lambda_0)^{\kappa_1},\ldots,
(\lambda-\lambda_0)^{\kappa_N}\right),
\end{equation}
which is the \emph{local Smith form} of the \emph{Schur operator} of $\mathfrak{L}(\lambda)$ at $\lambda_0$.
\par
On the other hand, according to  Lemma 10.1.1 of L\'{o}pez-G\'{o}mez and Mora-Corral \cite{LGMC}, there exist an integer $M\geq N$,  two invertible matrix polynomial curves $\mathfrak{P}_{1}, \mathfrak{P}_{2}\in\mathcal{H}(\Omega_{\lambda_{0}},GL(\mathbb{C}^{M}))$, and an operator $\mathscr{L}\in\mathcal{L}(\mathbb{C}^{M})$ such that
\begin{equation}
\label{vi.3}
    \mathfrak{P}(\lambda)\oplus I_{M-N}=\mathfrak{P}_{1}(\lambda)\cdot(\lambda I_{M}- \mathscr{L})\cdot \mathfrak{P}_{2}(\lambda) \quad \hbox{for all}\;\; \lambda\in\Omega_{\lambda_{0}}.
\end{equation}
Hence, bringing together \eqref{6.1} and \eqref{vi.3}, it becomes apparent that there are two finite dimensional curves $\mathfrak{E}_{1},\mathfrak{F}_{1}\in\mathcal{C}(\Omega_{\lambda_{0}},GL(\mathbb{C}^{M}))$ such that
\begin{equation}
\label{vi.4}
\mathscr{S}_{\mathfrak{L}(\lambda_{0}),(P,Q)}(\mathfrak{L}(\lambda)) \oplus I_{M-N}=\mathfrak{C}_{1}(\lambda)\cdot (\lambda I_{M}- \mathscr{L}) \cdot \mathfrak{F}_{1}(\lambda)
\quad\hbox{for all}\;\; \lambda\in\Omega_{\lambda_{0}}.
\end{equation}
Consequently, taking determinants on both sides of \eqref{vi.4} it is apparent that
\begin{align*}
\mathcal{D}[\mathscr{S}_{\mathfrak{L}(\lambda_{0}),(P,Q)}(\mathfrak{L}(\lambda))]&
= \mathcal{D}(\mathfrak{E}_{1}(\lambda)\cdot(\lambda I_{M}- \mathscr{L})\cdot \mathfrak{F}_{1}(\lambda))\\ 	 &=\mathcal{D}(\mathfrak{E}_{1}(\lambda))\cdot\mathcal{D}(\lambda I_{M}- \mathscr{L})\cdot \mathcal{D}(\mathfrak{F}_{1}(\lambda)).
\end{align*}
Thus, by Theorem \ref{th1.1},
\begin{equation*}
 \chi[\mathfrak{L},\lambda_{0}]=\ord_{\lambda_{0}} \mathcal{D}[\mathscr{S}_{\mathfrak{L}(\lambda_{0}),(P,Q)}(\mathfrak{L}(\lambda))]=
 \ord_{\lambda_{0}}\mathcal{D}(\lambda I_{M}- \mathscr{L}).
\end{equation*}
On the other hand, owing to Theorem \ref{th1.3}, we already know that
$$
  \mathcal{D}(\lambda I_{M}- \mathscr{L})
  =i(\mathcal{D}^{-1}(0),\lambda I_{M} - \mathscr{L};\lambda_{0} I_{M}-\mathscr{L}).
$$
Therefore,
$$
  \chi[\mathfrak{L},\lambda_{0}] = i(\mathcal{D}^{-1}(0),\lambda I_{M} - \mathscr{L};\lambda_{0} I_{M}-\mathscr{L}),
$$
which ends the proof of Theorem \ref{th1.4}.

\end{document}